\theoremstyle{plain} 
\newtheorem{theorem}{Theorem}[section]
\newtheorem{prop}[theorem]{Proposition}
\newtheorem{lem}[theorem]{Lemma}
\theoremstyle{definition} 
\newtheorem{defn}[theorem]{Definition}
\newtheorem{rem}[theorem]{Remark}
\newtheorem{conj}[theorem]{Conjecture}
\newcommand{\norm}[2]{\left\| {#1}\right\| _{#2}}
\newcommand{\abs}[2][]{\ensuremath{\left \lvert #2 \right \rvert_{#1}}} 
\newcommand{\C}[1]{\mathcal{#1}}
\newcommand{\B}[1]{\mathbb{#1}}
\newcommand{\oemph}[1]{\textit{#1}}
\newcommand{\rst}[1]{\ensuremath{{\mathbin\upharpoonright}%
\raise-.5ex\hbox{$#1$}}} 
\newcommand{\all}[2]{\{\,{#1}\,:\,{#2}\,\}} 
\newcommand{\cl}[1]{{\rm cl} ( #1 )} 
\newcommand{\cc}{\subset\!\!\!\subset}
\newcommand{\tr}{{\mathcal L}}
\newcommand{\e}[1]{{e}^{#1}}
\newcommand{\image}[1]{\operatorname{im}(#1)}
\title{Analytic expanding circle maps with explicit spectra}
\date{31 May 2013}
\author{Julia Slipantschuk, Oscar F.~ Bandtlow, and Wolfram Just}
\address{School of Mathematical Sciences, Queen Mary University of London, Mile End Road,
London E1 4NS, UK}
\email{j.slipantschuk@qmul.ac.uk,\, o.bandtlow@qmul.ac.uk, \,w.just@qmul.ac.uk}
\begin{document}


\begin{abstract}
We show that for any $\lambda \in \mathbb{C}$ with $|\lambda|<1$ there exists an analytic 
expanding circle map such that the eigenvalues of the 
associated 
transfer 
operator (acting on holomorphic functions) are precisely the nonnegative powers of 
$\lambda$ and $\overline{\lambda}$.
As a consequence we obtain a counterexample to a variant of a
conjecture of Mayer on the reality of spectra of transfer
operators. 
\end{abstract}

\maketitle

\section{Introduction}\label{sec:Intro} 
Transfer operators can be regarded as global representations
of a system's dynamics. Their spectral properties yield insight into
the 
statistical long-term behaviour of the underlying system, 
such as rates of mixing or the existence of central limit 
theorems (see, for example, \cite{Bala:00,BoyaGora:97}).

If $\{\phi_k\colon k=1,\ldots, K \}$ is the set of local inverse branches of a real
analytic expanding map, 
then the associated transfer operator $\mathcal{L}$, defined by 
\begin{equation}\label{eq:L_intro} 
 (\mathcal{L}f)(z) = \sum_{k=1}^{K} \phi'_k(z)(f\circ\phi_k)(z),
\end{equation} 
preserves and acts compactly on certain spaces of holomorphic functions 
(see, for example, \cite{BaJe_AM08, BaJe_LMS, Mayer1991, ruelle76}).\footnote{Often 
$\mathcal{L}$ is referred to as the Perron-Frobenius operator. 
More general transfer operators can be obtained by 
replacing the $\phi_k'$ in \eqref{eq:L_intro} by   
other suitable weight functions.} 
In particular, its spectrum is a sequence of 
eigenvalues $\{\lambda_n(\mathcal{L})\}$ 
converging to zero, together with
zero itself.\footnote{Under mild assumptions it
  is possible to show that the spectrum does not depend on the
  particular choice of holomorphic finction space, see
  \cite{BaJe_ETDS}.} 

Assuming that the eigenvalues of $\mathcal L$ are 
ordered with respect to decreasing modulus, the second eigenvalue $\lambda_2(\mathcal{L})$ determines the exponential rate of mixing for 
generic analytic observables. 
A faster exponential
rate of mixing can occur if one chooses (non-generic) observables which do not `feel' the rate corresponding to the first
$n$ subleading eigenvalues, that is, observables in a subspace complementary to the 
eigendirections of $\{\lambda_2(\mathcal{L}),\ldots,\lambda_{n+1}(\mathcal{L})\}$.
Consequently, knowledge of the whole spectrum $\sigma(\mathcal{L})$ is useful, since it  
determines the set of all possible exponential  mixing rates, also known as the correlation spectrum in this context. 


Surprisingly there exist only few examples of maps in the literature 
for which 
the spectrum of the corresponding transfer operator is known
explicitly. 
The only one-dimensional examples known to the authors are piecewise linear 
interval maps, or more generally, piecewise linear 
Markov interval maps 
(see, for example, \cite{SBJ}) and circle maps of the 
form $z\mapsto z^n$ for $n$ a non-zero 
integer. It turns out that for the latter maps 
the spectra of the corresponding transfer operators 
when acting on analytic functions are all identical and coincide with 
the two-point set $\{0,1\}$. See \cite[Exercise 2.15]{Bala:00} for a
proof when $n=2$; the general case can be proved along the same
lines. 





As a result, the question arises as to whether 
there are analytic circle maps for which the associated transfer operator 
has infinitely many non-zero eigenvalues.


The purpose of this paper is twofold: firstly, 
to construct examples of circle maps with 
infinitely many
explicit non-zero eigenvalues in the spectrum of $\mathcal{L}$.  
Denoting the circle by $\mathbb{T}=\{z\in \mathbb{C}:|z|=1\}$, 
and letting ${H^\infty}(A)$ 
be the space of bounded holomorphic functions on an annulus $A$ 
containing $\mathbb{T}$ (see Section 
\ref{sec:Rigorous} for precise definitions), we can state the following.
\begin{theorem}\label{thm:main}
For any $\lambda \in \mathbb{C}$ with $|\lambda|<1$ there exists an analytic expanding circle map 
$\tau$ such that the eigenvalues of the associated transfer operator $\mathcal{L}$ in 
\eqref{eq:L_intro}, when acting on ${H^\infty}(A)$, are precisely all nonnegative powers of 
$\lambda$ and $\overline{\lambda}$, that is, the spectrum of $\mathcal{L}$ is
\[ \sigma(\tr) = \all{\lambda^n}{n\in\mathbb{N}_0}\cup
\all{\overline{\lambda}^n}{n\in\mathbb{N}}\cup \{0\}\,.
\]
Moreover, the algebraic multiplicity of the leading eigenvalue is one,
while the algebraic multiplicity of the remaining eigenvalues is two
for real $\lambda$ and one for $\lambda$ with nonvanishing imaginary part.
\end{theorem}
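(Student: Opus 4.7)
The plan is to write down a specific degree-$2$ Blaschke product, exploit its Schwarz-reflection and Möbius symmetries to split $\mathcal{L}$ into two blocks (one for each fixed point of $\tau$ in $\{0,\infty\}$), and compute the spectrum on each block via triangularization.

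I would try the ansatz $\tau_\lambda(z) := z(z+\lambda)/(1+\bar{\lambda} z)$, whose extended dynamics on $\hat{\mathbb{C}}$ has fixed points at $0$, $\infty$, and one point on $\mathbb{T}$, with multipliers $\tau_\lambda'(0) = \lambda$ and (in the chart $w=1/z$) $\tau_\lambda'(\infty) = \bar{\lambda}$. Expansion on $\mathbb{T}$ should drop out of
\[ \frac{z\tau_\lambda'(z)}{\tau_\lambda(z)} = 1 + \frac{1-|\lambda|^2}{|1+\bar{\lambda} z|^2}, \]
which exceeds $1$ on $|z|=1$ whenever $|\lambda|<1$. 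Since the only pole of $\tau_\lambda$ sits at $-1/\bar{\lambda}$, the map is holomorphic on some annulus $A\supset\mathbb{T}$, and I would invoke the standard result that $\mathcal{L}$ is nuclear on $H^\infty(A)$ because the inverse branches contract $A$ strictly into itself.

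The key step is to decompose $H^\infty(A) = H^+ \oplus H^-$ by Laurent expansion, with $H^+$ spanned by $\{z^n:n\geq 0\}$ and $H^-$ by $\{z^{-n}:n\geq 1\}$, and to show that $\mathcal{L}$ preserves this splitting and is upper-triangular on each summand. The two inverse branches $\phi_1,\phi_2$ of $\tau_\lambda$ are the roots of $\phi^2 + (\lambda - \bar{\lambda}w)\phi - w = 0$, so by Vieta $\phi_1+\phi_2 = \bar{\lambda}w - \lambda$, $\phi_1\phi_2 = -w$, and implicit differentiation yields $\phi_k'(w) = (\bar{\lambda}\phi_k+1)/(\phi_k - \phi_{3-k})$. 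Inserting these into $(\mathcal{L}z^n)(w) = \phi_1'\phi_1^n + \phi_2'\phi_2^n$ produces a symmetric (hence polynomial) expression in $\phi_1,\phi_2$, while the identity $\phi_k^{-1} = -\phi_{3-k}/w$ does the same for $z^{-n}$. A leading-term computation should then give
\[ \mathcal{L}(z^n) = \bar{\lambda}^{n+1} z^n + \sum_{0\leq k<n} c_{n,k}\, z^k, \qquad \mathcal{L}(z^{-n}) = \lambda^{n-1} z^{-n} + \sum_{1\leq k<n} d_{n,k}\, z^{-k}, \]
producing upper-triangular matrices with diagonals $\bar{\lambda},\bar{\lambda}^2,\bar{\lambda}^3,\dots$ on $H^+$ and $1,\lambda,\lambda^2,\dots$ on $H^-$.

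Compactness plus this triangular form immediately yields $\sigma(\mathcal{L}) = \{0\} \cup \{\lambda^n : n\geq 0\} \cup \{\bar{\lambda}^n : n\geq 1\}$, as required. The multiplicity claim is then bookkeeping: each diagonal entry produces a one-dimensional generalized eigenspace within its block, so for non-real $\lambda$ the sequences $\{\lambda^n\}_{n\geq 1}$ and $\{\bar{\lambda}^n\}_{n\geq 1}$ are disjoint and every eigenvalue has algebraic multiplicity one; for real $\lambda$ these sequences fuse for $n\geq 1$, yielding multiplicity two, while the leading eigenvalue $1$ appears only in $H^-$ (as $\mathcal{L}(z^{-1}) = z^{-1}$ drops out of the Vieta identities) and hence keeps multiplicity one. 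The main obstacle is the algebraic content of the preceding paragraph---verifying that $\mathcal{L}(z^n)$ and $\mathcal{L}(z^{-n})$ genuinely collapse to (Laurent) polynomials in $w$ with the claimed leading coefficients for every $n$. This reduces to expressing the Dickson-type polynomials $(\phi_1^m - \phi_2^m)/(\phi_1 - \phi_2)$ in the symmetric variables $s = \bar{\lambda}w-\lambda$, $p = -w$ and tracking leading monomials: routine, but in need of care.
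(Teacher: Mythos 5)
Your proposal is sound in outline and, in its central computation, takes a genuinely different route from the paper. The paper never works with the inverse branches directly; instead it uses the duality identity
\[
c_{n-1}(\mathcal{L}e_{l-1}) = \frac{1}{2\pi i}\int_{\mathcal{C}_1} z^{l-n}\Bigl(\frac{1-\overline\lambda z}{\lambda - z}\Bigr)^n\frac{dz}{z}
\]
and evaluates these Fourier coefficients by residue calculus, obtaining the block-triangular matrix $L^{(N)}$ and reading off the diagonal. You instead compute $\mathcal{L}z^{\pm n}$ directly via Vieta's formulas for the two inverse branches, expressing the result through the symmetric divided differences $(\phi_1^m-\phi_2^m)/(\phi_1-\phi_2)$. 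This is a clean and somewhat more conceptual route to exactly the same triangular structure, and your leading-term bookkeeping ($\bar\lambda^{n+1}$ on the nonnegative side, $\lambda^{n-1}$ on the negative side, $\mathcal{L}z^{-1}=z^{-1}$ from $\phi_1\phi_2=-w$) checks out. The map you use is the paper's map conjugated by $z\mapsto -z$, so the spectra agree.

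There are, however, two places where the argument as written would not survive scrutiny. First, the claimed direct-sum decomposition $H^\infty(A)=H^+\oplus H^-$ is not a bounded Banach-space decomposition: the Riesz projection $f\mapsto\sum_{n\ge 0}c_n(f)z^n$ is not a bounded operator on $H^\infty(A)$, so ``$\mathcal{L}$ preserves the splitting'' does not literally make sense at the level of closed subspaces. What is true, and what the paper actually uses, is that $\mathcal{L}$ preserves each finite-dimensional space $H_N=\mathrm{span}\{z^{-N-1},\dots,z^{N-1}\}$, and this is all one needs. Second, and more seriously, ``compactness plus this triangular form immediately yields $\sigma(\mathcal{L})$'' is not a complete argument. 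Compactness alone tells you each diagonal entry is an eigenvalue (take an eigenvector of $\mathcal{L}|_{H_N}$), but it does not tell you that there are no other eigenvalues: an eigenfunction need not be a Laurent polynomial, so the finite-section argument does not reach it directly, and $\{z^n\}$ is not a Schauder basis of $H^\infty(A)$. The paper closes this gap with Lemma~\ref{cor:eigenval}: it shows $\|\mathcal{L}-P_N\mathcal{L}P_N\|\to 0$ (via the factorisation $\mathcal{L}=\tilde{\mathcal{L}}J$ and Lemma~\ref{lem:J}), and then invokes the spectral continuity theorem \cite[XI.9.5]{DS}. Your appeal to nuclearity hints at an alternative route through Fredholm determinants, which would also work, but you would still have to justify that $\det(1-s\mathcal{L})=\prod_n(1-s\mu_n)$ for your triangular operator. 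Either way, this step needs to be made explicit rather than dismissed as immediate.
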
   

Secondly, the analytic maps arising from Theorem~\ref{thm:main} 
yield interval 
maps\footnote{Throughout the paper we use lower case Greek letters to
  denote inverse branches of circle maps and the corresponding upper
  case letters to denote inverse branches of interval maps.} 
which
provide counterexamples to the following conjecture.
\begin{conj}[Weak variant of Mayer's conjecture in dimension one]
\label{conj: LevinMayer}
Let $\Omega \subset \mathbb{C}$ be a bounded domain with 
$\Omega_\mathbb{R} = \Omega\cap \mathbb{R} \neq \emptyset$ and 
$\Phi_k:\Omega \to \Omega$ 
contracting holomorphic mappings with their unique fixed 
points $z_k^*$ in $\Omega_\mathbb{R}$.
If the $\Phi'_k(z_k^*)$ are real, then all eigenvalues of 
the corresponding transfer operator 
\begin{equation*} 
 (\mathcal{L}f)(z) = \sum_{k=1}^{K} \Phi'_k(z)(f\circ\Phi_k)(z)
\end{equation*} 
with small enough modulus are real.
\end{conj}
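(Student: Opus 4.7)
The plan is to disprove Conjecture \ref{conj: LevinMayer} by turning the circle maps of Theorem \ref{thm:main} into counterexamples on the interval. Fix $\lambda \in \mathbb{C}$ with $0<|\lambda|<1$ and $\mathrm{Im}\,\lambda \neq 0$, and let $\tau$ be the corresponding circle map, so that $\mathcal{L}_\tau$ on $H^\infty(A)$ has spectrum $\{\lambda^n\} \cup \{\overline{\lambda}^n\} \cup \{0\}$. I expect the construction underlying Theorem \ref{thm:main} to yield (or admit an easy modification yielding) a $\tau$ that commutes with both the holomorphic involution $z \mapsto 1/z$ and the antiholomorphic involution $z \mapsto \overline{z}$ on a suitable annulus around $\mathbb{T}$. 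These two symmetries are what make the quotient to the interval possible and force real derivatives at the fixed points.

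I would then descend along the Joukowski projection $\pi(z) = \tfrac{1}{2}(z+z^{-1})$, which realises the quotient of a neighbourhood of $\mathbb{T}$ by $z \mapsto 1/z$ as a bounded domain $\Omega \subset \mathbb{C}$ with $\pi(\mathbb{T}) = [-1,1] \subset \Omega_\mathbb{R}$. The commutation of $\tau$ with $z \mapsto 1/z$ produces an analytic expanding map $T\colon [-1,1]\to[-1,1]$ satisfying $\pi \circ \tau = T \circ \pi$, whose inverse branches $\Phi_k$ extend to holomorphic contractions on $\Omega$. The additional conjugation symmetry forces each $\Phi_k$ to preserve $\mathbb{R} \cap \Omega$, placing its unique fixed point $z_k^*$ in $\Omega_\mathbb{R}$ and making $\Phi_k'(z_k^*)$ real; thus all hypotheses of Conjecture \ref{conj: LevinMayer} are verified.

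Finally, I would identify $\mathcal{L}_T$, via the isometry $f \mapsto (f \circ \pi)\cdot \text{(Jacobian)}$, with the restriction of $\mathcal{L}_\tau$ to the subspace of $H^\infty(A)$ consisting of functions invariant under $z \mapsto 1/z$. With the symmetries in hand, for each $n$ at least one of $\lambda^n,\overline{\lambda}^n$ persists as an eigenvalue of this restriction, so $\sigma(\mathcal{L}_T)$ contains non-real numbers of arbitrarily small modulus, refuting the conjecture. The main obstacle is the spectral descent in this last step: the invariant-subspace argument yields $\sigma(\mathcal{L}_T) \subseteq \sigma(\mathcal{L}_\tau)$ automatically, but to actually secure non-real eigenvalues one must analyse the explicit eigenfunctions produced in Theorem \ref{thm:main} and confirm that they have a non-zero component in the $z\mapsto 1/z$ invariant subspace rather than being annihilated by the symmetrisation.
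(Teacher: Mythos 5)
Your plan founders at its very first step, and precisely in the case that matters. The quotient by $z\mapsto 1/z$ via the Joukowski map requires $\tau$ to commute with this involution, i.e.\ $\tau(1/z)=1/\tau(z)$. For the family of Theorem~\ref{thm:main}, $\tau(z)=z(\lambda-z)/(1-\overline{\lambda}z)$, a direct computation gives $\tau(1/z)=(\lambda z-1)/\bigl(z(z-\overline{\lambda})\bigr)$ while $1/\tau(z)=(1-\overline{\lambda}z)/\bigl(z(\lambda-z)\bigr)$; equating these forces $\lambda=\overline{\lambda}$. The same holds for the conjugation symmetry $\overline{\tau(\overline{z})}=\tau(z)$. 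So the symmetries you ``expect'' exist only for real $\lambda$ -- but real $\lambda$ gives a real spectrum and hence no counterexample, whereas for non-real $\lambda$ (the only relevant case) there is no interval map $T$ with $\pi\circ\tau=T\circ\pi$ at all, and no ``easy modification'' is exhibited. The subsequent steps (descent of the branches to contractions of a Joukowski domain, identification of $\mathcal{L}_T$ with the symmetric subspace, survival of eigenfunctions under symmetrisation) all rest on this nonexistent structure, and you yourself flag the last of these as unresolved.

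The paper avoids symmetry altogether: it cuts the circle open at a fixed point $z_0$ of $\tau$, obtaining a real-analytic full-branch map $T$ of $I=[-1,1]$ via a projection $p$ with $p(x_0)=z_0$ (so realness of the fixed points and of $\Phi_k'(z_k^*)$ is automatic, since each inverse branch maps $I$ into $I$). The key spectral input is Lemma~\ref{lem:interval}: $\sigma(\mathcal{L}_I)=\sigma(\mathcal{L}_\mathbb{T})\cup\{T'(x_0)^{-n}:n\in\mathbb{N}\}$, proved by intertwining $\mathcal{L}_I Q_p=Q_p\mathcal{L}_\mathbb{T}$ with $Q_pf=p'\cdot(f\circ p)$ (giving $\sigma(\mathcal{L}_\mathbb{T})\subseteq\sigma(\mathcal{L}_I)$ with no need to track eigenfunction components) and by the dual functionals $l_n(f)=f^{(n)}(x_1)-f^{(n)}(x_0)$, which span $\mathcal{L}_I^*$-invariant subspaces accounting exactly for the extra eigenvalues $T'(x_0)^{-n}$. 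Combining this with Theorem~\ref{thm:main} for non-real $\lambda$ yields non-real eigenvalues $\lambda^n,\overline{\lambda}^n$ of arbitrarily small modulus for an operator satisfying the hypotheses of Conjecture~\ref{conj: LevinMayer}. If you want to rescue your idea you would have to either construct a genuinely $1/z$-symmetric analytic expanding circle map with prescribed non-real spectrum (not done here), or abandon the quotient and adopt the cut-at-a-fixed-point mechanism.
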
 
Mayer~\cite{Mayer1987} originally conjectured that transfer
operators satisfying the hypotheses of the above conjecture have real
spectra. Counterexamples to Mayer's conjecture were given by 
Levin in~\cite{Levin1994} which led to the above weakening of Mayer's
conjecture. 

Before proceeding to definitions and rigorous proofs, 
we shall briefly explain the genesis of our result.
Turning the classical question of finding eigenvalues of $\tr$ for 
a given map on its head, we attempt to construct
a map for which the transfer operator has a given 
eigenvalue and a given eigenfunction.
Considering an analytic expanding map on the interval $I$ with two full
branches, the eigenvalue problem of the transfer operator formally reads
\begin{equation}\label{eq:EVeq}
  \mu_n u_n = \Phi_1'\cdot (u_n \circ \Phi_1) + \Phi_2' \cdot (u_n \circ \Phi_2),       
\end{equation}
where $\mu_n$ and $u_n$ are an eigenvalue and eigenfunction of $\tr$.

Given an analytic invariant density $\rho$ one may consider \eqref{eq:EVeq}
for $n=0$ with $\mu_0=1$ and $u_0=\rho$ as an equation
to compute suitable inverse branches $\Phi_1$ and $\Phi_2$ of the map.
This setup is a particularly simple
case of the so called inverse Perron-Frobenius problem 
\cite{Ershov1988,Gora1993}, which has
been applied in various guises to taylor-make 
chaotic maps with given
stationary
properties (see, for example, \cite{Diakonos1996}). 
As we are attempting to construct a map with two branches we are at
liberty to specify 
a nontrivial eigenvalue and corresponding eigenfunction. 
Thus, given an invariant
density $\rho$, a
real number $\lambda$ with $|\lambda|<1$, and
a potential eigenfunction $u$, we seek to solve
\begin{equation}\label{eq:PU}
 \begin{split}
 P =  P\circ \Phi_1 + P \circ \Phi_2,  \\ 
  \lambda U = U\circ\Phi_1 + U\circ\Phi_2  
 \end{split}
\end{equation}
for the inverse branches $\Phi_1$ and $\Phi_2$.
Here P and U denote suitable antiderivatives of $\rho$ and $u$, respectively.
A priori, there is no guarantee that \eqref{eq:PU} admits a
real solution for $\Phi_1$ and $\Phi_2$ and that such a solution
actually determines
an analytic full branch interval map. Developing conditions under
which this is the case seems to be a challenging task.
Nevertheless, if we fix the interval $I=[-1,1]$, take $\rho$ to be the 
uniform density,
and $u(x)=\cos(\pi x)$ for the
eigenfunction with eigenvalue $\lambda$,
then \eqref{eq:PU} leads to
\begin{equation}\label{eq:cos}
\begin{split}
 x &= \Phi_1(x)+\Phi_2(x),  \\
\lambda \sin(\pi x) &= \sin(\pi \Phi_1(x))+\sin(\pi \Phi_2(x)).
\end{split}
\end{equation}
After a short calculation it is possible to deduce 
from \eqref{eq:cos} an explicit expression for 
$\Phi_1$ and $\Phi_2$, and, as we shall see in Section~\ref{sec:circle}, this
indeed determines an analytic full branch map.
It turns out that for this particular example 
the complete spectrum can be obtained, an observation that provides 
the main content of Theorem~\ref{thm:main}.

Certainly, this reasoning does not generally provide examples with fully
prescribed spectrum. However, the method deserves further exploration, as it
furnishes maps with given partial spectral information.

The paper is structured as follows. 
In Section~\ref{sec:Rigorous} we introduce analytic expanding circle maps, and 
define Banach spaces of holomorphic functions on which the associated transfer 
operators $\mathcal{L}$ are compact.
Section~\ref{sec:circle} is devoted to the construction of a family of circle maps and 
the proof
of Theorem \ref{thm:main}. In Section~\ref{sec:Interval} we consider the same family of
maps on an interval and thus obtain counterexamples to Conjecture \ref{conj: LevinMayer}.

\section{Transfer operators for analytic circle maps}\label{sec:Rigorous}

The main purpose of this section is to define suitable function spaces on
which transfer operators induced by analytic expanding circle
maps are compact. 
We start by defining what is meant by an analytic expanding circle map.

\begin{defn}
 We  say that $\tau:\mathbb{T}\to\mathbb{T}$ is an \oemph{analytic expanding circle map} if the following two conditions hold:
\begin{enumerate}[(i)]
 \item $\tau$ is analytic on $\mathbb{T}$;
 \item $ \inf_{z\in \mathbb T}|\tau'(z)|>1$.
\end{enumerate}
\end{defn}
It is not difficult to see that $\tau$ is a $K$-fold covering of $\mathbb T$ for some integer $K>1$. Moreover, the map $\tau$ has analytic extensions to certain annuli containing $\mathbb T$. With slight abuse of notation we shall write $\tau$ for the various 
extensions as well.  
To be precise, for $r<1<R$ let $A_{r,R}$ denote the open annulus 
$A_{r,R} = \all{z \in \mathbb{C}}{r<|z|<R}$ and write  
\[ \mathcal A=\all{ A_{r,R}}{\text{$\tau$ is analytic on $A_{r,R}$}}\,. \]
The expansivity of $\tau$ yields the following result. 

\begin{lem}\label{lem:expand}
 If $\tau$ is an analytic expanding circle map, then there is $A_0\in \mathcal A$  such that 
 \begin{enumerate}[(a)]
 \item both $\tau$ and $1/\tau$ are analytic on the closure $\cl{A_0}$ of $A_0$; 
\item
   $\tau(\partial A_0) \cap \cl{A_0} = \emptyset$, 
 where $\partial A_0$ denotes the boundary of $A_0$. 
 \end{enumerate}
 \end{lem}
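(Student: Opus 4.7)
The idea is to first find an annulus on which both $\tau$ and $1/\tau$ extend analytically, then to shrink it further so that its boundary is pushed off $\cl{A_0}$ by $\tau$, using the expansivity hypothesis.

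Condition (a) will follow from essentially soft arguments. By definition $\tau$ extends analytically to some open neighborhood of $\mathbb{T}$, which by compactness contains some annulus $A_{r_1,R_1}\in\mathcal A$ on whose closure $\tau$ is holomorphic. Since $|\tau|=1$ on $\mathbb{T}$, the zero set of $\tau$ is disjoint from $\mathbb{T}$, so after shrinking we may assume $\tau$ is nowhere zero on $\cl{A_{r_1,R_1}}$, making $1/\tau$ analytic there as well.

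For (b), I would study how $|\tau|$ varies radially near $\mathbb{T}$. Choose an analytic lift $\psi\colon\mathbb{R}\to\mathbb{R}$ with $\tau(e^{i\theta})=e^{i\psi(\theta)}$; differentiation in $\theta$ yields $\tau'(e^{i\theta})\,e^{i\theta}/\tau(e^{i\theta}) = \psi'(\theta)$, so $\psi'$ is real-valued with $|\psi'(\theta)| = |\tau'(e^{i\theta})|$. Expansivity and compactness of $\mathbb{T}$ then give $|\psi'|\geq c$ for some $c>1$, and continuity forces $\psi'$ to keep a fixed sign. A direct calculation shows that the radial derivative of $\log|\tau|$ at $z=e^{i\theta}$ equals $\operatorname{Re}(\tau'(e^{i\theta})e^{i\theta}/\tau(e^{i\theta})) = \psi'(\theta)$, so a Taylor expansion, uniform in $\theta$, gives
\[
\log|\tau((1+t)e^{i\theta})| \;=\; t\,\psi'(\theta) \,+\, O(t^2).
\]
Comparing with $\log(1\pm t_0) = \pm t_0 + O(t_0^2)$ and using $|\psi'|\geq c>1$ with fixed sign, for all sufficiently small $t_0>0$ the choice $r = 1-t_0$, $R = 1+t_0$ forces both boundary circles $\{|z|=r\}$ and $\{|z|=R\}$ to be mapped into the complement of $\cl{A_{r,R}}$: if $\psi'>0$ then $|\tau|>R$ on $\{|z|=R\}$ and $|\tau|<r$ on $\{|z|=r\}$, while if $\psi'<0$ the two inclusions are exchanged. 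Shrinking $t_0$ further if necessary so that $[r,R]\subset(r_1,R_1)$, the annulus $A_0 = A_{r,R}$ satisfies both (a) and (b).

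The only real subtlety is uniformity in $\theta$: one needs a uniform sign for $\psi'$ and a uniform bound on the $O(t^2)$ remainder. Both follow from compactness of $\mathbb{T}$ together with the strict inequality $|\tau'|>1$, so the argument reduces to a quantitative version of the intuitive fact that an expanding circle map pushes nearby concentric circles away from $\mathbb{T}$.
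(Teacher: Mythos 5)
Your proposal is correct and follows essentially the same route as the paper: both arguments hinge on showing that $e^{i\theta}\tau'(e^{i\theta})/\tau(e^{i\theta})$ is real on $\mathbb{T}$ with modulus bounded below by a constant $>1$ and uniform sign, which is the radial derivative of $\log|\tau|$ there, and then concluding that concentric circles near $\mathbb{T}$ are pushed outside the annulus. The only difference is in execution — the paper bounds the radial derivative below by a constant $\beta>1$ on a whole annulus and integrates radially, whereas you use a uniform first-order Taylor expansion with an $O(t^2)$ remainder — but this is a minor technical variant of the same idea.
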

\begin{proof}
Since $\tau$ is an analytic expanding circle map it is possible to choose $A_1\in \mathcal A$ such that both $\tau$ and $1/\tau$ are analytic on 
$\cl{A_1}$ with 
\[ \alpha:=\inf_{z\in A_1}\abs{\tau'(z)}>1\,.\]
It is not difficult to see that $(\rho ,\theta)\mapsto \log \abs{\tau(\rho \e{i\theta})}$ is differentiable for all $(\rho,\theta)$ with 
$\rho \exp(i\theta)\in A_1$ and 
\begin{align}
\frac{\partial}{\partial \rho} \log \abs{\tau(\rho \e{i\theta})} &= 
   \Re \left ( \e{i\theta} \frac{\tau'(\rho  \e{i\theta})}{  \tau(\rho  \e{i\theta})} \right )\,,\label{e:rde} \\
   \frac{\partial}{\partial \theta } \log \abs{\tau(\rho \e{i\theta})} &= 
   -\Im \left ( \rho\e{i\theta} \frac{\tau'(\rho \e{i\theta})}{  \tau(\rho  \e{i\theta})} \right )\,,\label{e:phide} 
\end{align}     
where $\Re(z)$ and $\Im(z)$ denote the real and imaginary part of $z\in \mathbb C$.  Since $\tau$ leaves $\mathbb T$ 
invariant, equation (\ref{e:phide}) implies either  
\begin{equation}
\label{e:pos}
 \e{i\theta} \frac{\tau'( \e{i\theta})}{  \tau( \e{i\theta})} \geq \alpha 
\quad (\forall \theta \in \mathbb R)\,,
\end{equation}
or 
\begin{equation}
\label{e:neg}
\e{i\theta} \frac{\tau'( \e{i\theta})}{  \tau( \e{i\theta})} \leq -\alpha 
\quad (\forall \theta \in \mathbb R)\,. 
\end{equation}
Suppose now that (\ref{e:pos}) holds (the other case can be dealt with similarly). Fixing $\beta$ with 
$1<\beta <\alpha$ we can choose $A_{r,R}\in \mathcal A$ with 
$A_{r,R}\subset A_1$ and $\e{\beta(r-1)}<r$, $\e{\beta (R-1)} >R$ 
such that 
\[ \Re \left ( \e{i\theta} \frac{\tau'( \rho \e{i\theta})}{  \tau( \rho \e{i\theta})} \right ) \geq \beta 
  \quad (\forall \rho\in [r,R], \forall \theta \in \mathbb R)\,.\]
Equation~(\ref{e:rde}) now implies 
\[
\log \abs{\tau(\e{i\theta})}-\log\abs{\tau(r \e{i\theta})}= \Re \int_r^1 \e{i \theta} \frac{\tau'(\rho  \e{i \theta})}{\tau(\rho  \e{i\theta})}\,d\rho 
\geq \beta (1-r)
\]
and
\[
\log \abs{\tau(R\e{i\theta})}-\log\abs{\tau(\e{i\theta})}= \Re \int_1^R \e{i \theta} \frac{\tau'(\rho  \e{i \theta})}{\tau(\rho  \e{i\theta})}\,d\rho 
\geq \beta (R-1)\,.
\]
Thus 
\[
\abs{\tau (r \e{i \theta})} \leq \e{\beta(r-1)}<r  \qquad \text{and} \qquad \abs{\tau(R \e{i \theta})} \geq \e{\beta (R-1)} >R\,,
\]
so $A_0:=A_{r,R}$ has all the desired properties. 
\end{proof}

Given an expanding circle map $\tau$, we associate with it a transfer operator $\tr$ by setting 
\begin{equation}\label{ldef}
(\tr f)(z) = \sum_{k=1}^K \phi'_k(z)(f\circ\phi_k)(z) \quad (z \in \mathbb T)\,,
\end{equation}
where $\phi_k$ to denotes the $k$-th local inverse of $\tau$. 
It turns out that $\mathcal L$ is well-defined and bounded as an 
operator on $L^1(\mathbb T)$. Although this is a standard result, we shall provide a short proof, since 
part of the argument will play a crucial role later on. In the following we shall use $\mathcal C_\rho$ to denote a simple closed positively 
oriented path along the circle centred at the origin with radius $\rho$.  

\begin{lem} \label{lem:dual}
For any $f\in L^1(\mathbb T)$ and any $g\in L^\infty(\mathbb T)$ we have 
\begin{equation}\label{lem:dual:e}
\frac{1}{2\pi i}\int_{\mathcal C_1}(\tr f)(z)\cdot g(z)\,dz =  \frac{1}{2\pi i}\int_{\mathcal C_1}f(z)\cdot (g\circ \tau)(z) \,dz\,. 
\end{equation}
In particular, the transfer operator $\tr $ is bounded as an operator from $L^1(\mathbb T)$ to $L^1(\mathbb T)$. 
\end{lem}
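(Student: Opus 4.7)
The identity (\ref{lem:dual:e}) is essentially a change of variables, and my plan is to expand the defining sum of $\tr f$ inside the integral and substitute $w = \phi_k(z)$ in each summand. Since each $\phi_k$ is holomorphic on a neighbourhood of $\mathbb{T}$, one has $dw = \phi_k'(z)\,dz$, so the weight $\phi_k'(z)$ built into the definition of $\tr$ is precisely the Jacobian converting $\phi_k'(z)(f\circ\phi_k)(z)g(z)\,dz$ into $f(w)g(\tau(w))\,dw$, using $\tau(w) = z$. This is what makes (\ref{ldef}) the natural dual of $g \mapsto g\circ\tau$ under the contour pairing.

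The second ingredient is geometric. As $z$ traces $\mathcal{C}_1$ once counterclockwise, each branch $\phi_k$ traces out an arc $\gamma_k \subset \mathbb{T}$; assuming $\tau$ preserves orientation (the other case is analogous), analytic continuation from one branch to the next as $z$ winds around $\mathcal{C}_1$ makes the arcs $\gamma_1,\dots,\gamma_K$ fit together with matching endpoints so that their concatenation traverses $\mathcal{C}_1$ exactly once counterclockwise. Summing the substituted integrals over $k$ then yields
\[
\sum_{k=1}^K \int_{\mathcal{C}_1}\phi_k'(z)(f\circ\phi_k)(z)g(z)\,dz = \sum_{k=1}^K \int_{\gamma_k}f(w)g(\tau(w))\,dw = \int_{\mathcal{C}_1} f(w)g(\tau(w))\,dw,
\]
which is (\ref{lem:dual:e}) after dividing by $2\pi i$.

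For the $L^1$-boundedness I would use the identity in combination with a duality argument. Given $f \in L^1(\mathbb{T})$, pick $h \in L^\infty(\mathbb{T})$ with $\|h\|_\infty \leq 1$ satisfying $\frac{1}{2\pi}\int_0^{2\pi}(\tr f)(e^{i\theta})h(e^{i\theta})\,d\theta = \|\tr f\|_{L^1}$, and set $g(z) := h(z)/z$. Then $\|g\|_\infty \leq 1$ on $\mathbb{T}$, and the contour pairing $\frac{1}{2\pi i}\int_{\mathcal{C}_1}(\tr f)(z)g(z)\,dz$ reduces to that same $L^1$-expression after absorbing the Jacobian factor $e^{i\theta}$ arising from $dz$. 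Applying (\ref{lem:dual:e}) and estimating $|f(z) g(\tau(z))| \leq |f(z)|$ on $\mathbb{T}$ then yields $\|\tr f\|_{L^1} \leq \|f\|_{L^1}$.

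The main obstacle is the geometric bookkeeping in the change of variables, namely verifying that the arcs $\phi_k(\mathcal{C}_1)$ reassemble into $\mathcal{C}_1$ with the correct multiplicity and orientation. Once this is settled, everything else reduces to a routine substitution and an elementary duality estimate.
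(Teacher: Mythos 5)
Your proof takes the same route as the paper's: change variables $w=\phi_k(z)$ in each summand of $\tr f$ and use that the arcs $\phi_k(\mathcal C_1)$ tile $\mathcal C_1$ exactly once (up to measure zero), then deduce $L^1$-boundedness from the duality pairing, your explicit choice of $g=h/z$ being a slightly more spelled-out version of the paper's one-line estimate. One small caveat: the orientation-reversing case is not quite ``analogous'' as you claim, since the arcs $\phi_k(\mathcal C_1)$ then traverse $\mathcal C_1$ clockwise and a sign appears in the identity (e.g.\ for $\tau(z)=z^{-2}$ one finds $\tr(z^{-1})=-z^{-1}$), but the paper's own proof silently makes the same assumption and every map it constructs is orientation-preserving, so this does not affect the result.
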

\begin{proof} Using change of variables we see that 
\begin{multline*}
\frac{1}{2\pi i}\int_{\mathcal C_1} (\tr f)(z) \cdot  g(z)\, dz
= \sum_{k=1}^K \frac{1}{2\pi i}\int_{\mathcal  C_1} \phi'_k(z) \cdot ( f\circ \phi_k )(z) \cdot g(z)\, dz \\
= \sum_{k=1}^K  \frac{1}{2\pi i}\int_{\phi_k({\mathcal C_1})} f(z) \cdot (g\circ \tau)(z) \, dz 
= \frac{1}{2\pi i}\int_{\mathcal C_1} f(z) \cdot (g\circ \tau)(z) \, dz\,,
\end{multline*}
where we have used the fact that 
$\cup_{k=1}^K\phi_k({\mathcal C_1})={\mathcal C_1}$ up to a set of measure zero.  
Finally, the assertion that $\tr$ maps $L^1(\mathbb T)$ continuously
into itself follows from 
equation~(\ref{lem:dual:e}), since for any $f\in L^1(\mathbb T)$ and  any $g\in L^\infty(\mathbb T)$ 
\begin{equation*}
\abs{\frac{1}{2\pi i}\int_{\mathcal C_1}(\tr f)(z)\cdot g(z)\,dz} \leq 
\norm{f}{L^1(\mathbb T)}\norm{g\circ \tau}{L^\infty(\mathbb T)}=
\norm{f}{L^1(\mathbb T)}\norm{g}{L^\infty(\mathbb T)}\,. 
\qedhere
\end{equation*}
\end{proof}

It turns out that $\mathcal L$ leaves certain subspaces of
$L^1({\mathbb T})$ invariant. Of particular interest are spaces 
consisting of holomorphic functions. 
\begin{defn}
For $U$, an open subset of ${\mathbb C}$,   
we write 
\[ H^\infty(U) = \all{f:U\to{\mathbb C}}{f \text{ holomorphic and }  
\sup_{z \in U }|f(z)|<\infty} \]
to denote the Banach space of bounded holomorphic
functions on $U$ equipped with the norm 
 $\norm{f}{H^\infty(U)}=\sup_{z \in U}|f(z)|$.
\end{defn}
The proof of the invariance of $H^\infty(U)$ under $\mathcal L$ for 
suitable $U$ will rely on Fourier theory. Here and in the following, we shall
use 
\begin{equation}
c_n(f)=\frac{1}{2 \pi i}\int_{\C{C}_1} \frac{f(z)}{z^{n+1}}\, dz
\end{equation}
to denote the $n$-th Fourier coefficient of $f\in L^1({\mathbb T})$. 

Before stating the next result we require some more notation. 
Given two subsets $U$ and $V$ of $\mathbb C$ we write 
\[ U\cc V\]  if
$\cl{U}$ is a compact subset of $V$. 

We now have the following result. 

\begin{lem}\label{lem:Ltilde}
Suppose that annuli $A$ and $A'$ in ${\mathcal A}$ have been 
chosen\footnote{This is always possible by Lemma~\ref{lem:expand}.} 
such that 
\begin{equation}
\label{accap}
A_0 \cc A' \subset A \text{ and } \tau(\partial A_0)\cap \cl{A}=\emptyset\,.
\end{equation} 
Then the transfer operator $\tr $ maps $H^\infty(A')$ continuously to $H^\infty(A)$. 
\end{lem}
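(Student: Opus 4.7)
The plan is to extend $\mathcal{L}f$ from $\mathbb{T}$ to all of $A$ by means of a contour integral representation, and then estimate it directly from that representation. Concretely, for $f \in H^\infty(A')$ and $z \in A$, I would define
\[
(\widetilde{\mathcal{L}} f)(z) := \frac{1}{2\pi i} \int_{\partial A_0} \frac{f(w)}{\tau(w)-z}\,dw ,
\]
with $\partial A_0$ positively oriented (outer circle counter-clockwise, inner circle clockwise). This integral makes sense because $\partial A_0 \subset \cl{A_0} \cc A'$, so $f$ is bounded holomorphic on a neighbourhood of $\partial A_0$, and because the hypothesis $\tau(\partial A_0)\cap\cl{A}=\emptyset$ guarantees
\[
d := \inf_{w \in \partial A_0,\, z \in \cl{A}} |\tau(w)-z| > 0 .
\]

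With this definition I would carry out three steps. First, holomorphy of $\widetilde{\mathcal L}f$ on $A$ follows by differentiating under the integral sign, which is justified since the integrand and its derivative in $z$ are jointly continuous on $\partial A_0 \times \cl{A}$. Second, the bound
\[
\|\widetilde{\mathcal L}f\|_{H^\infty(A)} \le \frac{\ell(\partial A_0)}{2\pi\, d}\, \|f\|_{H^\infty(A')}
\]
is immediate from the integral representation, which gives the continuity statement once we show $\widetilde{\mathcal L}f = \mathcal L f$ on $\mathbb T$. Third, for this identification I would fix $z \in \mathbb T \subset A$ and compute $\widetilde{\mathcal L}f(z)$ by residues: the function $w \mapsto f(w)/(\tau(w)-z)$ is meromorphic on a neighbourhood of $\cl{A_0}$, with poles precisely at the preimages of $z$ under $\tau$ lying in $A_0$. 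These preimages are simple (by expansivity, since $\tau'$ does not vanish), so the residue at $\phi_k(z)$ equals $f(\phi_k(z))/\tau'(\phi_k(z)) = \phi_k'(z)\, f(\phi_k(z))$, and summing over the $K$ inverse branches on $\mathbb T$ recovers $\mathcal{L}f(z)$ as defined in \eqref{ldef}.

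The one step that needs a small extra argument is showing that the inverse branches $\phi_1(z),\ldots,\phi_K(z)$ are exactly \emph{all} the zeros of $\tau(w)-z$ inside $A_0$ (so that no spurious residues appear). Here I would invoke the argument principle: since $\tau(\partial A_0)\cap \cl{A}=\emptyset$, as $z$ ranges over $A$ the winding number of $\tau\circ\gamma$ about $z$ is constant for each boundary component $\gamma$ of $A_0$, so the total count of preimages in $A_0$ is constant in $z$. Taking $z \in \mathbb T$, where by definition of $\tau$ there are exactly $K$ preimages and all lie on $\mathbb T \subset A_0$, fixes this count at $K$. This is the only geometric subtlety; everything else is a standard contour-estimate argument, so I expect this topological step to be the main (mild) obstacle. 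Finally, having shown that $\widetilde{\mathcal L}f \in H^\infty(A)$ agrees with $\mathcal L f$ on $\mathbb T$, the extension is unique by the identity principle, and the continuity estimate above completes the proof.
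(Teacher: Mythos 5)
Your proof is correct, and it takes a route that is related to but packaged differently from the paper's. The paper works mode by mode: using Lemma~\ref{lem:dual} it writes $c_n(\tr f) = \frac{1}{2\pi i}\int_{\mathcal C_1} f(z)\,\tau(z)^{-n-1}\,dz$, deforms the contour to the outer (for $n\ge 0$) or inner (for $n<0$) boundary circle of $A_0$, and uses $\tau(\partial A_0)\cap\cl{A}=\emptyset$ to get geometric decay of $|c_n(\tr f)|$; summing the resulting Laurent series on $A$ then exhibits $\tr f\in H^\infty(A)$ together with a norm bound. Your proof effectively sums the geometric series $\sum_n z^n\tau(w)^{-n-1}=(\tau(w)-z)^{-1}$ \emph{before} integrating, producing a Cauchy-type kernel on $\partial A_0$ from which holomorphy on $A$ and the operator-norm bound drop out in one stroke; what you pay for this is the residue computation and the argument-principle step needed to identify the kernel integral with $\mathcal L f$ on $\mathbb T$. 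Both proofs rest on the same two ingredients --- the contour deformation from $\mathcal C_1$ to $\partial A_0$ and the separation $\tau(\partial A_0)\cap\cl{A}=\emptyset$ --- but yours is more compact and yields an explicit kernel for the extended operator, whereas the paper's avoids any winding-number discussion.

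One point to tighten: at the step ``by definition of $\tau$ there are exactly $K$ preimages and all lie on $\mathbb T$'', this alone only shows the zero count of $\tau(w)-z$ in $A_0$ is at least $K$; a priori there could be further preimages in $A_0\setminus\mathbb T$. The clean justification is to compute the two winding numbers directly: $\tau$ is non-vanishing on $A_0$ and $\tau|_{\mathbb T}$ has degree $\pm K$, so the image of the outer boundary circle of $A_0$ winds $K$ (or $0$) times about the origin and lies entirely outside $\cl{A}$ (respectively inside the inner disk of $A$), hence winds $K$ (respectively $0$) times about every $z\in A$; and likewise for the inner circle. The argument principle then gives exactly $K$ zeros in $A_0$ for every $z\in A$, which the simple zeros $\phi_1(z),\dots,\phi_K(z)$ already account for. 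With this supplied, your proof is complete.
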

\begin{figure}
\includegraphics[trim= 0 -30 0 -70,width=.308\paperwidth]{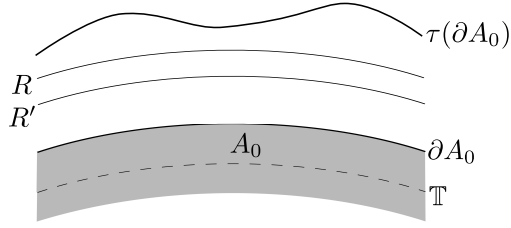}
\caption{Proof of Lemma \ref{lem:Ltilde}: choice of annuli $A_0,A'$ and $A$. }
\label{fig:Annuli}
\end{figure}

\begin{proof}
Given $f\in H^\infty(A')$, we
shall show that ${\mathcal L}f\in H^\infty(A)$ by estimating the
asymptotic behaviour 
of the Fourier coefficients of ${\mathcal L}f$. 
Write $R_0$ and $R$ to denote the radii of the circles forming the 
`exterior' boundary of $A_0$ and $A$, respectively 
(see Figure \ref{fig:Annuli}). 
Next choose
$R''$ with 
\[ \inf_{z\in \C{C}_{R_0}}|\tau(z)|>R''>R\,.\]
Similarly, write $r_0$ and $r$ to denote the radii of the circles forming the 
`interior' boundary of $A_0$ and $A$, respectively and choose 
$r''$ with 
\[ \sup_{z\in \C{C}_{r_0}}|\tau(z)|<r''<r\,.\]

Fix $f\in H^\infty(A')$ with $\norm{f}{H^\infty(A')}\leq 1$ and let $n\geq 0$. Using Lemma~\ref{lem:dual} we see that
\begin{multline*}
 |c_n(\C{L} f)| = 
\left|\frac{1}{2 \pi i} \int_{\C{C}_1} \frac{f(z)}{\tau(z)^{n+1}}\,dz\right| =
\left|\frac{1}{2 \pi i} \int_{\C{C}_{R_0}} \frac{f(z)}{\tau(z)^{n+1}}\,dz\right| \\
\leq \frac{1}{2 \pi} \int_{\C{C}_{R_0}} \frac{1}{|\tau(z)|^{n+1}}\,|dz| 
\leq \frac{R_0}{(R'')^{n+1}}\,.  
\end{multline*}
Similarly, for $n\geq1$ we have 
\begin{multline*}
 |c_{-n}(\C{L} f)| = 
\left|\frac{1}{2 \pi i} \int_{\C{C}_1} \frac{f(z)}{\tau(z)^{-n+1}}\, dz\right| =
\left|\frac{1}{2 \pi i} \int_{\C{C}_{r_0}} f(z)\tau(z)^{n-1}\, dz\right| \\
\leq \frac{1}{2 \pi} \int_{\C{C}_{r_0}}
|\tau(z)|^{n-1}\, |dz| 
\leq r_0(r'')^{n-1}\,.  
\end{multline*}
Hence, $\sum_{n=0}^{\infty} c_n(\tr f)z^n$ converges absolutely for all 
$|z|\leq R$
and $\sum_{n=1}^{\infty} c_{-n}(\tr f)z^{-n}$ converges absolutely for all
$|z|\geq r$. Moreover, for $z\in A$ we have  
\begin{multline*} 
 \abs{\sum_{n=-\infty}^{\infty} c_n(\tr f)z^n} 
 \leq  \sum_{n=0}^{\infty} \abs{c_n(\tr f)} R^n + 
            \sum_{n=1}^{\infty} \abs{c_{-n}(\tr f)}r^{-n} \\
  \leq  \sum_{n=0}^{\infty} \frac{R_0R^n}{(R'')^{n+1}} + 
            \sum_{n=1}^{\infty} \frac{r_0(r'')^{n-1}}{r^n}           
  =\frac{R_0}{R''-R}+\frac{r_0}{r-r''}\,.
 \end{multline*}
  Thus, by the uniqueness of the Fourier transform on $L^1(\mathbb T)$, we conclude that 
 $\tr f\in H^\infty(A)$ and 
 \[ \norm{\tr f}{H^\infty(A)}\leq \left ( \frac{R_0}{R''-R}+\frac{r_0}{r-r''}  \right )\norm{f}{H^\infty(A')} \quad (\forall f\in H^\infty(A'))\,. \qedhere\] 
 \end{proof}

Choosing $A=A'$ in the previous lemma shows that $\tr$ induces a well
defined continuous operator from $H^\infty(A)$ to itself. 
It turns out that $\tr: H^\infty(A)\to H^\infty(A)$ is compact. In
order to prove this result, we shall employ a factorisation argument.   

Given $A,A'\in \C{A}$ with $A'\cc A$ define the canonical
embedding $J:H^\infty(A)\rightarrow H^\infty(A')$ 
\begin{equation}\label{eq:J}
(Jf)=f|_{A'}\,.
\end{equation}
The embedding $J$ is compact by Montel's Theorem (see, for example, 
\cite[Chapter~7, Theorem 2.9]{Conway73}) and, as we shall shortly see, 
is well-approximated by the following operators: 
for $N$ a positive integer, define the finite rank operator  
$J_{N}:{H^\infty}(A)\rightarrow{H^\infty}(A')$ by
\begin{equation}\label{eq:J_N1N2}
(J_{N} f)(z)= \sum_{n=-N-1}^{N-1} c_{n}(f) z^{n} \quad \text{ for }  z\in A'\,.
\end{equation}
The approximability of $J$ alluded to earlier is the content of the
following result. 
\begin{lem}\label{lem:J}
 Let $J$ and $J_{N}$ be defined as above.
 Then 
  \[
 \lim_{N\to \infty}\norm{J-J_{N}}{H^\infty(A)\rightarrow H^\infty(A')}=0\,.
 \] 
In particular, the embedding $J$ is compact. 
\end{lem}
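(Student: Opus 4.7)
The plan is to estimate $(J - J_N)f$ explicitly using the Laurent expansion of $f$ on the annulus $A$. Writing $A = A_{r,R}$ and $A' = A_{r',R'}$ with $r < r' < R' < R$ (permissible since $A' \cc A$), every $f \in H^\infty(A)$ admits a Laurent series $f(z) = \sum_{n \in \mathbb{Z}} c_n(f) z^n$ converging on $A$, and by construction $(J - J_N)f$ is precisely the two-sided tail
\[ ((J-J_N)f)(z) = \sum_{n \geq N} c_n(f) z^n + \sum_{n \leq -N-2} c_n(f) z^n \quad (z \in A'). \]

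The next step is to produce sharp-enough bounds on the Fourier coefficients. Pick intermediate radii $R''$ and $r''$ with $R' < R'' < R$ and $r < r'' < r'$. Using the Cauchy-type integral representation
\[ c_n(f) = \frac{1}{2\pi i}\int_{\mathcal C_\rho} \frac{f(z)}{z^{n+1}}\,dz \]
valid for any $\rho \in (r,R)$ (noting $\mathcal C_\rho \subset A$), one takes $\rho = R''$ for $n \geq 0$ to obtain $|c_n(f)| \leq \|f\|_{H^\infty(A)} (R'')^{-n}$ and $\rho = r''$ for $n \leq 0$ to obtain $|c_n(f)| \leq \|f\|_{H^\infty(A)} (r'')^{-n}$.

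These estimates feed directly into the tail sums: for $z \in A'$, using $|z| \leq R'$ in the positive tail and $|z| \geq r'$ in the negative tail,
\[ |((J-J_N)f)(z)| \leq \|f\|_{H^\infty(A)} \left( \sum_{n \geq N} \Big(\frac{R'}{R''}\Big)^n + \sum_{n \geq N+2} \Big(\frac{r''}{r'}\Big)^n \right). \]
Since $R'/R'' < 1$ and $r''/r' < 1$, both geometric tails vanish as $N \to \infty$, independently of $f$. Taking the supremum over the unit ball of $H^\infty(A)$ shows that $\|J - J_N\|_{H^\infty(A) \to H^\infty(A')} \to 0$.

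Finally, each $J_N$ is of finite rank, with image contained in the span of $z^{-N-1}, \ldots, z^{N-1}$, so $J$ is a uniform limit of finite-rank operators between Banach spaces and is therefore compact. There is no genuine obstacle here; the only care required is the choice of four interlaced radii $r < r'' < r' < R' < R'' < R$ so that the Cauchy bounds on $|c_n(f)|$ beat the growth of $|z|^n$ and $|z|^{-n}$ on $A'$, producing convergent geometric series.
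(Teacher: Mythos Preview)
Your proof is correct and follows essentially the same approach as the paper's: both pick intermediate radii $r<r''<r'<R'<R''<R$, bound the Laurent coefficients via Cauchy estimates on the circles of radius $R''$ and $r''$, and then sum the resulting geometric tails. The paper phrases this by introducing an intermediate annulus $A''$ with $A'\cc A''\cc A$, but the content is identical, as is the concluding remark that $J$ is a norm-limit of finite-rank operators and hence compact.
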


\begin{proof} Choose $A''\in \C{A}$ with 
\[ A'\cc A'' \cc A\,.\]
Let $R'$ and $r'$ denote the radii of the circle forming the `exterior'
and `interior' boundary of $A'$, respectively,  
and let $\C{C}_{R''}$ and $\C{C}_{r''}$ denote the oriented `exterior'
and `interior' boundary of $A''$, respectively, so that 
\[ r''<r'<R'<R''\,.\] 
Fix $f\in H^\infty(A)$ with $\norm{f}{H^\infty(A)}\leq 1$. Then 
\begin{align*}
  \norm{Jf-J_{N}f}{H^\infty(A')}
  &=\sup_{z\in A'}\left |\sum_{n\geq N} \frac{z^n}{2\pi i} 
  \int_{\mathcal C_1}\frac{f(\zeta)}{\zeta^{n+1}}\, d\zeta +  
   \sum_{n\geq N+2}\frac{z^{-n}}{2\pi i} 
  \int_{\mathcal C_1}\frac{f(\zeta)}{\zeta^{-n+1}}\, d\zeta \right |\\
  &\leq \sum_{n\geq N} \frac{(R')^n}{2\pi} \int_{\C{C}_{R''}} 
    \frac{|f(\zeta)|}{|\zeta|^{n+1}}\, |d\zeta| + 
    \sum_{n\geq N+2} \frac{(r')^{-n}}{2\pi} 
   \int_{\C{C}_{r''}} \frac{|f(\zeta)|}{|\zeta|^{-n+1}}\,|d\zeta|\\
  &\leq \left(\frac{R'}{R''}\right )^{N}\frac{1}{1-\frac{R'}{R''}}
     +
    \left(\frac{r''}{r'}\right )^{N+2}\frac{1}{1-\frac{r''}{r'}}
\end{align*}
from which the assertions follow. 
\end{proof}

Now, the transfer operator $\tr:{H^\infty}(A)\rightarrow {H^\infty}(A)$, 
factorises as 
\begin{equation}\label{eq:L}
 \tr = \tilde{\tr}J
\end{equation}
where $J:H^\infty(A)\to H^\infty(A')$ is the canonical embedding,
which is compact by Lemma~\ref{lem:J}, and 
$\tilde{\tr}$ is the transfer operator viewed as an operator 
from $H^\infty(A')$ to $H^\infty(A)$, guaranteed to be continuous by 
Lemma~\ref{lem:Ltilde}. Thus, the factorisation (\ref{eq:L}) 
implies the following result.   
\begin{prop}\label{cor:compact}
Let $A\in {\mathcal A}$ with $A_0\cc A$. Then $\tr:H^\infty(A)\to
H^\infty(A)$ is compact.
\end{prop}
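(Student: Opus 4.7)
The plan is to realise the factorisation $\tr = \tilde{\tr} \circ J$ already anticipated in~(\ref{eq:L}) just above the statement. First I would pick an intermediate annulus $A' \in \mathcal{A}$ with $A_0 \cc A' \cc A$; such an $A'$ plainly exists, since $A_0 \cc A$ leaves strictly positive radial gaps at both the inner and outer boundaries of $A$, so one merely contracts the radii of $A$ slightly (taking $A' = A_{r',R'}$ with $r_A < r' < r_0$ and $R_0 < R' < R_A$). This $A'$ serves as the middle space in the chain $H^\infty(A) \xrightarrow{J} H^\infty(A') \xrightarrow{\tilde{\tr}} H^\infty(A)$.

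Next I would invoke the two preceding lemmas on the links of this chain separately. Lemma~\ref{lem:Ltilde}, applied to the pair $A'$ and $A$, asserts that $\tilde{\tr} : H^\infty(A') \to H^\infty(A)$ is bounded. Lemma~\ref{lem:J}, applied with $A' \cc A$, asserts that the restriction map $J : H^\infty(A) \to H^\infty(A')$, $Jf = f|_{A'}$, is compact, being the operator-norm limit of the finite-rank truncations $J_N$.

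Finally, since the composition of a bounded operator with a compact operator is compact, the factorisation $\tr = \tilde{\tr} \circ J$ immediately yields compactness of $\tr : H^\infty(A) \to H^\infty(A)$. The only point requiring any vigilance is the hypothesis $\tau(\partial A_0) \cap \cl{A} = \emptyset$ demanded by Lemma~\ref{lem:Ltilde}, which is implicit in the compatibility of the choices of $A_0$ (from Lemma~\ref{lem:expand}) and the given $A$, namely that $A$ lies strictly inside the radial strip $(\sup_{z\in\C{C}_{r_0}}|\tau(z)|,\,\inf_{z\in\C{C}_{R_0}}|\tau(z)|)$ guaranteed by the construction of $A_0$. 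Beyond this small compatibility check, the argument is purely the soft assembly of pieces already in place, so no genuine obstacle remains.
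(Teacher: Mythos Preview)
Your proposal is correct and follows precisely the approach the paper takes: the factorisation $\tr = \tilde{\tr}\circ J$ via an intermediate annulus $A'$ with $A_0\cc A'\cc A$, invoking Lemma~\ref{lem:Ltilde} for boundedness of $\tilde{\tr}$ and Lemma~\ref{lem:J} for compactness of $J$, is exactly the argument the paper gives in the paragraph surrounding~(\ref{eq:L}) immediately before the proposition. Your extra vigilance about the side condition $\tau(\partial A_0)\cap\cl{A}=\emptyset$ from Lemma~\ref{lem:Ltilde} is well placed---the paper leaves this implicit, and strictly speaking it is an additional constraint on $A$ beyond $A_0\cc A$; your reading (that $A$ should be taken to satisfy this compatibility, as is always possible by Lemma~\ref{lem:expand}) matches the paper's intent as signalled by the footnote in Lemma~\ref{lem:Ltilde}.
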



\section{A family of circle maps}\label{sec:circle}
In this section we introduce a family of analytic expanding circle maps 
for which we are able to explicitly determine the spectrum of $\mathcal{L}.$
As already mentioned in the introduction,  
the family arises from the construction of an expanding map for which
the transfer operator has a specified eigenfunction for a 
given eigenvalue $\lambda$.  
A short calculation reveals that for 
$\lambda \in (-1,1)$ a solution to \eqref{eq:cos} 
lifts to $\Phi\colon\B{R}\to\B{R}$, where 
\begin{equation}\label{eq:phi}
 \Phi(x)= \frac{x}{2} - \frac{1}{\pi} \arccos{\left(\lambda 
\cos \left(\frac{\pi x}{2} \right)\right).}
\end{equation}
This is  an increasing diffeomorphism with inverse
$F\colon\mathbb{R}\to \mathbb{R}$ given by 
\begin{equation}\label{eq:F}
 F(x)= 2x+1+\frac{2}{\pi} \arctan \Big (\frac{\lambda \sin(\pi x)}{1-\lambda \cos(\pi x)} \Big)\,.
\end{equation}
Note that $F$ is a lift of a circle map $\tau:\mathbb{T}\to\mathbb{T}$,
which satisfies $F(x+2) = F(x)+4$ and $p \circ F = \tau \circ p,$ where $p:\mathbb{R}\to \mathbb{T}$ 
is the projection map defined by $p(x)= e^{i\pi x}.$
The map $\tau$ is a twofold covering of $\B{T}$
(see Figure~\ref{fig:Map}). 
Note that $F'>1$ for $\lambda \in (-1,1)$. Thus $\tau$
is an analytic expanding circle map.  

It turns out that  $\tau$ can be written in closed form. 
Using the relation $e^{i\pi F(x)} = \tau (e^{i\pi x})$, it follows
that, for $\lambda \in \mathbb{R}$,  
\[ \pi F(x) =  \pi x + \arg(\frac{\lambda - e^{i\pi x}}{1-\lambda e^{i\pi x}}) = 
2\pi x + \pi + 2\arg(1-\lambda e^{-i \pi x})\,,\]
which gives
\begin{equation}\label{eq:tau}
 \tau(z)=\frac{z (\lambda-z)}{1-\overline{\lambda} z}  \quad \text{for } z \in \mathbb{T}\,.
\end{equation}
It is not difficult to see  
that the above expression for $\tau$ yields an analytic expanding circle
map not just for real $\lambda$, but for any $\lambda \in \mathbb{C}$ 
with $|\lambda|<1$ (see Figure~\ref{fig:Map}).  
It is possible to write down lifts of (\ref{eq:tau}) for complex
$\lambda$ similar to \eqref{eq:F}. In fact, a short calculation shows
that if $\lambda = |\lambda|e^{i\alpha}$ 
then the argument of $\arctan$ in \eqref{eq:F} needs to be replaced by 
$|\lambda| \sin(\pi x-\alpha)/ (1-|\lambda| \cos(\pi x-\alpha))$.
\begin{figure}[h!]

\includegraphics[trim= 0 -20 0 -20,width=.5\paperwidth]{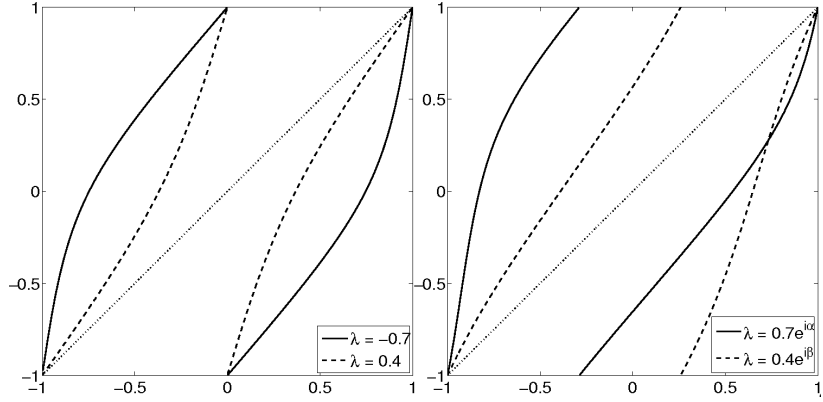}

\caption{\label{fig:Map}$\tau$ projected on the interval 
$[-1,1]$ for (left) $\lambda = -0.7$ and $\lambda = 0.4$ and
(right) $\lambda = -0.3-i\sqrt{0.4} = 0.7e^{i\alpha}$ with $\alpha \approx -2.0137$
and $\lambda = 0.1+i\sqrt{0.15} = 0.4e^{i \beta}$ with $\beta \approx 1.318.$
Note that the projection is chosen such that the interval endpoint
$-1$ is fixed by $\tau$.}
\end{figure}
Given $\tau$ as in (\ref{eq:tau}), we now choose an annulus 
$A \in \C{A}$ with $A_0\cc A$. By Proposition~\ref{cor:compact} the 
associated transfer operator 
$\mathcal{L}:{H}^\infty(A) \to {H}^\infty(A)$ is well-defined and
compact. 

As we shall see, the spectrum of $\mathcal{L}$ can be computed by
analysing the spectrum of a suitable 
matrix representation, which is obtained as follows.  
For $N\in\B{N}$ consider the projection $P_N$  
given by the same functional 
expression as $J_N$ in \eqref{eq:J_N1N2}, now viewed as an 
operator from $H^\infty(A)$ to  $H^\infty(A)$. 
Clearly, $P_N\tr P_N$ is an operator of rank $2N+1$. 
Writing $e_n(z)=z^n$, the set 
$\all{e_n}{-N-1\leq n \leq N-1}$ 
is a basis for 
\[ H_N= P_N(H^\infty(A))\] 
and the restriction of 
$P_N\C{L}P_N$ to $H_N$ is represented by the
$(2N+1) \times (2N+1)$ matrix 
$L^{(N)}$ defined by

\begin{equation}\label{eq:L^N}
\begin{split}
(L^{(N)})_{n,l} = c_{n-1}(\C{L}e_{l-1})
=\frac{1}{2\pi i} \int_{\C{C}_1} \frac{z^l}{\tau(z)^{n}}\frac{dz}{z} 
=\frac{1}{2\pi i} \int_{\C{C}_1} z^{l-n} \Big (\frac{1-\overline{\lambda}z}{\lambda-z} 
\Big)^n \frac{dz}{z}.
\end{split}
\end{equation} 
In particular, the non-zero spectrum of 
$P_N\C{L}P_N$ is given by the non-zero spectrum of $L^{(N)}$. 

Observe that \eqref{eq:L^N} defines an infinite matrix $L$ containing  
$L^{(N)}$ as a finite
submatrix. The following lemma summarizes the properties of $L$.

\begin{lem}\label{lem:L_explicit}
For $l,n \in \mathbb{Z}$ the following hold:
\begin{enumerate}[(a)]
 \item $L_{0,0}= 1$; 
 \item $L_{0,l}= 0$  if $l\neq0$; 
 \item $L_{-n,-l} = \overline{L_{n,l}} $; 
 \item $L_{-n,-n} = \lambda^n$  for $n\geq 0 $; 
 \item $L_{n,l} = L_{-n,-l} = 0$ for $n>l$. 
 \end{enumerate}
 
\end{lem}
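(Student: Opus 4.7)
My plan is to evaluate each of (a)--(e) by direct contour integration, exploiting the fact that $\tau(z) = z(\lambda - z)/(1 - \overline{\lambda} z)$ is built from a Blaschke factor. Let $B(z) = (\lambda - z)/(1 - \overline{\lambda} z)$, which is holomorphic on a neighbourhood of the closed unit disk, satisfies $B(0) = \lambda$, and has $|B(z)| = 1$ on $\mathbb{T}$; correspondingly, $1/B$ is holomorphic on a neighbourhood of the closed exterior $\{|z| \geq 1\}$. In this notation the integrand defining $L_{n,l}$ is $z^{l - n - 1} B(z)^{-n}$.

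Parts (a) and (b) are immediate: substituting $n = 0$ collapses the integrand to $z^{l - 1}$, whose integral over $\C{C}_1$ equals $\delta_{l, 0}$. For (c), I would parametrise $z = e^{i\theta}$ on $\mathbb{T}$ and use $|B(z)| = 1$ there, which gives $\overline{B(z)^{-n}} = B(z)^n$ on $\mathbb{T}$; taking the complex conjugate of $L_{n,l}$ and using $\overline{z^k} = z^{-k}$ on $\mathbb{T}$ reproduces precisely the integral defining $L_{-n,-l}$. For (d), I would observe that $L_{-n, -n} = \frac{1}{2\pi i}\int_{\C{C}_1} B(z)^n \, \frac{dz}{z}$; for $n \geq 0$ the function $B^n$ is holomorphic on the closed unit disk, so Cauchy's integral formula returns $B(0)^n = \lambda^n$.

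The substantive step is (e), which I read as two vanishing claims that are equivalent given (c) but each admit a clean direct proof. For $L_{-n, -l} = 0$ when $n > l \geq 0$, the integrand becomes $z^{n - l - 1} B(z)^n$, a polynomial in $z$ (since $n > l$) times a nonnegative power of $B$; this is holomorphic on the closed unit disk, so Cauchy's theorem yields zero. For $L_{n, l} = 0$ when $n > l$ with $n \geq 1$, the integrand $z^{l - n - 1} B(z)^{-n}$ is instead holomorphic on $\{|z| \geq 1\}$ (the only pole of $1/B$ is at $\lambda$, inside the unit disk) and decays at infinity as $O(|z|^{-2})$ because $n - l \geq 2$; deforming $\C{C}_1$ outward to a circle of radius $R \to \infty$ then gives zero.

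The only real obstacle is bookkeeping: for each sub-claim of (e) one must recognise, on the basis of the pole-and-zero pattern of $B$, whether the vanishing is witnessed by holomorphy inside the unit disk or by sufficient decay outside, and verify that the surviving power of $z$ does not spoil the hypothesis chosen.
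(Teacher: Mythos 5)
Your argument is correct and follows the same contour-integral route as the paper, organized around the Blaschke-factor decomposition $\tau(z)=zB(z)$: parts (a)--(d) match the paper essentially verbatim. The only divergence is in (e), where the paper proves just $L_{-n,-l}=0$ (holomorphy of $z^{n-l-1}B(z)^n$ inside the disk) and obtains $L_{n,l}=0$ for free from the conjugation symmetry (c), whereas you additionally give a direct proof of $L_{n,l}=0$ by outward contour deformation. That direct argument is valid and a nice complement, but note a small slip in the bookkeeping: the exponent on $z$ in the integrand is $l-n-1$, so the decay at infinity is $O(|z|^{-(n-l+1)})$, and the hypothesis $n>l$ gives $n-l\geq 1$ (hence exponent $\leq -2$), not $n-l\geq 2$ as written; the $O(|z|^{-2})$ bound you want is still what you get, so the conclusion stands.
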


\begin{proof}
Assertions (a) and (b)
immediately follow from \eqref{eq:L^N}, \\
while (c) is a consequence of    
\begin{multline*}
 L_{-n,-l} = 
\frac{1}{2\pi i} \int_{\C{C}_1} z^{n-l} 
  \Big (\frac{\lambda-z}{1-\overline{\lambda}z} \Big)^n \frac{dz}{z}=
\frac{1}{2\pi}\int_{0}^{2\pi} e^{i\theta(n-l)} 
  \Big (\frac{\lambda-e^{i\theta}}{1-\overline{\lambda}e^{i\theta}} \Big)^n \,d\theta \\
= 
\frac{1}{2\pi } \int_{0}^{2\pi} e^{-i\theta(l-n)} 
  \Big (\frac{1-\lambda e^{-i\theta}}{\overline{\lambda}-e^{-i\theta}} \Big)^n \,d\theta 
= \overline{L_{n,l}}\,.
\end{multline*}
For (d) and (e), observe that 
$z\mapsto \left( \frac{\lambda-z}{1-\overline{\lambda}z}\right)$ is holomorphic for all $z$ in the closed
unit disk. Thus, by the Residue Theorem,  
$$L_{-n,-n}=\frac{1}{2\pi i} \int_{\mathcal{C}_1}
  \frac{1}{z}\Big (\frac{\lambda-z}{1-\overline{\lambda}z} \Big)^n \, dz=\lambda^n.$$
Finally $n>l$ implies $L_{-n,-l}=0$, as the intergrand is a 
holomorphic function.
\end{proof}

The lemma above 
implies that $L^{(N)}$ has the following upper-lower triangular matrix
structure\footnote{Note that 
the matrix elements can be computed explicitly.
For $n>0$ and $l\in \mathbb{Z}$ we have 
\[ 
  (L^{(N)})_{n,l} 
  = \begin{cases} 
    (-\overline{\lambda})^{2n-l}  \sum_{m=0}^{l-n} {l-m-1 \choose n-1} {n \choose m} (-|\lambda|^2)^{l-n-m} & \text{if  $l-n\leq n$} \\
    \lambda^{l-2n}  \sum_{m=0}^{n} {l-m-1 \choose n-1} {n \choose m} (-|\lambda|^2)^{n-m} & 
    \text{if  $l-n\geq n.$}
  \end{cases}
\]
}
\begin{equation*}
L^{(N)}=\left(
\begin{array}{ccccccc}
 \lambda^N & 0 & 0 & 0 & 0 & \ldots & 0 \\
 \vdots & \ddots & 0 & 0 & \vdots & \ddots & \vdots \\
 * & * & \lambda & 0 & 0 & \ldots & 0 \\
 0 & 0 & 0 & 1 & 0 & 0 & 0 \\
 0 & \ldots & 0 & 0 & \overline{\lambda} & * & * \\
 \vdots & \ddots & \vdots & 0 & 0 & \ddots & \vdots \\
 0 & \ldots & 0 & 0 & 0 & 0 & \overline{\lambda}^N
\end{array}
\right).
\end{equation*}
Clearly, the spectrum 
of $L^{(N)}$ is given by the diagonal elements
$(L^{(N)})_{n,n}$, that is, 
\[\sigma(L^{(N)}) = \{\lambda^n: n=0,\ldots, N\} \cup \{\overline{\lambda}^n:n=1,\ldots, N\}.\]
Moreover, 
the triangular structure of $L^{(N)}$ implies 
$\mathcal{L}(H_N) \subseteq H_N$. Before embarking on the proof of our
main result, we require one more 
lemma, which 
relates the eigenvalues of $\tr$ with the eigenvalues of $L^{(N)}$. 

\begin{lem}\label{cor:eigenval}
Let $A\in {\mathcal A}$ with $A_0\cc A$ and 
suppose that $\tr(H_N)\subseteq H_N$ for every $N\in\B{N}$. 
Then the non-zero eigenvalues (with multiplicities) of 
$\tr:H^\infty(A)\to H^\infty(A)$ are precisely
the non-zero eigenvalues of $L^{(N)}$ as $N$ tends to infinity.
In particular, the spectrum of $\tr$ is given by 
\[ \sigma(\tr) =
\cl{\bigcup_{N\in \B{N}}{\sigma(\C L|_{H_N})}}=
\cl{\bigcup_{N\in \B{N}}{\sigma(L^{(N)})}}\,.
\]
\end{lem}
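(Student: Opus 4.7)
The plan is to combine a finite-rank approximation of $\tr$, similar to the one used in the proof of Proposition~\ref{cor:compact}, with classical spectral perturbation theory for compact operators converging in norm.

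First I would pick $A'\in\mathcal{A}$ with $A_0\cc A'\cc A$ and factor $\tr = \tilde{\tr} J$ as in the proof of Proposition~\ref{cor:compact}, where $J\colon H^\infty(A)\to H^\infty(A')$ is the compact embedding from Lemma~\ref{lem:J} and $\tilde{\tr}\colon H^\infty(A')\to H^\infty(A)$ is the bounded operator from Lemma~\ref{lem:Ltilde}. Then introduce the finite-rank approximations $\tr_N := \tilde{\tr} J_N\colon H^\infty(A)\to H^\infty(A)$. Combining Lemma~\ref{lem:J} with the boundedness of $\tilde{\tr}$ gives
\[ \|\tr - \tr_N\|_{H^\infty(A)\to H^\infty(A)} \leq \|\tilde{\tr}\|\cdot \|J-J_N\|_{H^\infty(A)\to H^\infty(A')} \longrightarrow 0. \]

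The key structural observation is that $\mathrm{range}(J_N)\subseteq H_N$ and the hypothesis $\tr(H_N)\subseteq H_N$ together force $\mathrm{range}(\tr_N)\subseteq H_N$; a routine expansion of $(\tr_N - \mu)^d$, exploiting $\mu\neq 0$, then shows that the full generalised eigenspaces of $\tr_N$ for non-zero eigenvalues already lie inside $H_N$. Since $J_N$ restricts to the identity on $H_N$, we have $\tr_N|_{H_N} = \tr|_{H_N}$, whose matrix in the basis $\{e_n\}$ is $L^{(N)}$. Thus the non-zero eigenvalues of $\tr_N$, counted with algebraic multiplicity, coincide exactly with those of $L^{(N)}$.

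The inclusion $\bigcup_N \sigma(L^{(N)})\setminus\{0\}\subseteq \sigma(\tr)$ is immediate, since $H_N\subseteq H^\infty(A)$ is $\tr$-invariant and so every non-zero eigenvalue of $L^{(N)}$ is a non-zero eigenvalue of $\tr$. For the reverse inclusion with algebraic multiplicities, fix $\mu \in \sigma(\tr)\setminus\{0\}$ of multiplicity $m$, which is finite since $\tr$ is compact by Proposition~\ref{cor:compact}, and choose $\varepsilon\in(0,|\mu|)$ so small that $\overline{D(\mu,\varepsilon)}\cap\sigma(\tr)=\{\mu\}$. Classical spectral perturbation theory for norm-convergent sequences of compact operators then yields that, for all sufficiently large $N$, the total algebraic multiplicity of $\sigma(\tr_N)$ inside $\overline{D(\mu,\varepsilon)}$ equals $m$. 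Since $\varepsilon<|\mu|$, these are non-zero eigenvalues of $\tr_N$, and hence of $L^{(N)}$; by the forward inclusion each of them lies in $\sigma(\tr)\cap\overline{D(\mu,\varepsilon)}=\{\mu\}$. Therefore $\mu\in\sigma(L^{(N)})$ with algebraic multiplicity exactly $m$ for all large $N$.

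This accounts for all non-zero eigenvalues. Finally, $0\in\sigma(\tr)$ since $\tr$ is compact on an infinite-dimensional space, and $0\in\overline{\bigcup_N \sigma(L^{(N)})}$ either because non-zero eigenvalues of $\tr$ accumulate at $0$, or, in the case where $\tr$ has only finitely many non-zero eigenvalues, because the rank of $L^{(N)}$ is eventually less than $2N+1$ and so $0\in\sigma(L^{(N)})$. The main technical obstacle is the last step of the reverse inclusion: norm convergence alone only produces clusters of eigenvalues of $\tr_N$ near each $\mu\in\sigma(\tr)\setminus\{0\}$, and it is the invariance hypothesis, combined with the forward inclusion, that collapses each cluster onto $\mu$ itself and so transfers algebraic multiplicities cleanly from $\tr$ to $L^{(N)}$.
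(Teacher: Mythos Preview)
Your proof is correct and follows essentially the same route as the paper: the paper also factorises $\tr=\tilde{\tr}J$, uses the invariance hypothesis to get $\tr P_N=P_N\tr P_N$ (equivalently $\mathrm{range}(\tr_N)\subseteq H_N$, so your $\tr_N$ is exactly the paper's $P_N\tr P_N$), and then deduces $\|\tr-P_N\tr P_N\|\to 0$ from $\|J-J_N\|\to 0$. The only difference is cosmetic: where you spell out the cluster-collapsing argument for multiplicities by hand, the paper simply invokes \cite[XI.9.5]{DS}.
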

\begin{proof}
Clearly $\sigma(L^{(N)})\subseteq \sigma(\tr)$. 
Since 
$\tr(H_N)\subseteq H_N$ we have 
$\tr P_N=P_N\tr P_N$ for every $N\in\B{N}$. Using the factorisation~\eqref{eq:L}
we see that  
 \begin{align*}
\|\tr -P_N\tr P_N\|_{H^\infty(A)\to H^\infty(A)}
&=\|\tr -\tr P_N\|_{H^\infty(A)\to H^\infty(A)}\\
&=\|\tilde{\tr}J-\tilde{\tr}JP_N\|_{H^\infty(A)\to H^\infty(A)}\\
&=\|\tilde{\tr}(J-J_N)\|_{H^\infty(A)\to H^\infty(A)}\\
&\leq\|\tilde{\tr}\|_{H^\infty(A')\to H^\infty(A)}
\|(J-J_N)\|_{H^\infty (A)\to H^\infty(A')}\,,
\end{align*}
which, using Lemmas~\ref{lem:Ltilde} and \ref{lem:J}, implies 
\[
\lim_{N\to\infty}
\|\C{L}-P_N\mathcal{L}P_N\|_{{H^\infty}(A)\to{H^\infty}(A)}=0\,.
\]
This,
together with \cite[XI.9.5]{DS} guarantees that 
every non-zero eigenvalue of $\tr$ is an eigenvalue of $L^{(N)}$ for 
some $N\in \B{N}$.
\end{proof}
We are now able to prove our main result. 
\begin{proof}[Proof of Theorem \ref{thm:main}]
 For every $\lambda \in \B{C}$ with $|\lambda| < 1$, the map $\tau$ in \eqref{eq:tau} is an analytic
 expanding circle map. By Lemma \ref{cor:eigenval} the spectrum of the
 associated $\mathcal{L}:H^\infty(A)\to H^\infty(A)$ 
consists of eigenvalues, together with zero, given by
\[ \sigma(\C{L}) =
\cl{\bigcup{\sigma(\C L|_{H_N})}}=\cl{\bigcup{\sigma(L^{(N)})}}=
\{\lambda^n : n \in \mathbb{N}_0\}\cup \{\overline{\lambda}^n : n\in \mathbb{N}\} 
\cup \{0\}\,.\]
The assertions concerning the multiplicities of the eigenvalues of
$\tr$ follow from the corresponding properties of $L^{(N)}$.  
\end{proof}


\section{Circle maps considered on an interval}\label{sec:Interval}
In the previous section we have considered the transfer operator 
$\mathcal{L}_\mathbb{T}:{H^\infty}(A)\rightarrow {H^\infty}(A)$ 
associated to an analytic expanding circle map $\tau:\mathbb{T}\rightarrow\mathbb{T},$ which maps
the space of bounded holomorphic functions on an appropriately chosen annulus $A\in\C{A}$
compactly into itself. 
The circle map $\tau$ gives rise to a map $T$ on 
an interval $I = [x_0,x_1]$, chosen such that a fixed point $z_0$ of 
$\tau$ corresponds to the interval endpoint $x_0$.
Choosing a suitable complex neighbourhood $D$ of $I$, we shall now
study the spectral properties of $\mathcal{L}_I:H^\infty(D)\to
H^\infty(D)$, the transfer operator corresponding to $T$. 



More precisely, let $T:I\to I$ denote the interval map arising from
the circle map 
$\tau:\mathbb{T}\to \mathbb{T}$ via $p\circ T = \tau \circ p$ 
with a projection $p:I\to \mathbb{T}$ satisfying $p(x_0) = z_0$.\footnote{
A suitable choice is 
$p(x)=e^{2i\pi \frac{(x-x_0)}{(x_1-x_0)}+ i \arg{z_0}}$.} 
Let $\{\Phi_k \colon k=1,\ldots,K \}$ be the set of 
inverse branches of $T$. With slight abuse of notation we 
keep writing $T$ and $\Phi_k$ for the respective
analytic extensions to neighborhoods containing $I$.
Since $\tau$ is an analytic K-covering, we have 
the matching conditions 
(with suitable labelling of the
inverse branches)
\begin{equation}\label{eq:fixpt}
\begin{aligned}
\Phi_1(x_0) &= x_0\,, &
\Phi_K(x_1) &= x_1\,, &
\Phi_1^{(n)}(x_0) = \Phi_K^{(n)}(x_1)\,,\\
\Phi_{k+1}(x_0) &= \Phi_k(x_1)\,, &
\Phi^{(n)}_{k+1}(x_0)&=\Phi_k^{(n)}(x_1) &
\text{for } k=1,\ldots, K-1\,,
\end{aligned}
\end{equation}
where for each $n\in \B{N}$, we use 
$\Phi_k^{(n)}$ to denote the $n$-th derivative of
$\Phi_k$. 

Since $T$ is expanding, all inverse branches $\Phi_k$ are contractions.
We can thus 
choose a topological disk $D$ containing $I$ such that $p(D)=A$ and
$\Phi_k(D)\cc D$ for all $k$.
Then $\mathcal{L}_I: {H}^\infty(D) \to {H}^\infty(D)$, given by
\[ \mathcal{L}_If =  
\sum_{k=1}^{K} \Phi'_k\cdot (f\circ \Phi_k)\,,
\] 
yields a bounded operator. 
Moreover, $\mathcal{L}_I$ is compact (see, for example, 
\cite{BaJe_AM08,Mayer1991}),
its spectrum consisting of countably many eigenvalues
accumulating at zero only.


\begin{rem}\label{rem:Fp}
It is perhaps not surprising that 
the operators $\tr_\mathbb{T}$ and $\tr_I$ are closely related.
In order 
to see this, we define the operator $Q_p:H^\infty(A)\to H^\infty(D)$ by 
\[ (Q_pf)(x) = p'(x)f(p(x))\,.\] 
Clearly $p(D) = A$ implies
$Q_p$ injective. However, the operator 
$Q_p$ is not surjective,  
as the image $\image{Q_p} = 
\{f\in H^\infty(D): f^{(n)}(x_0) = f^{(n)}(x_1) ~\forall n\in\B{N}_0\} $
is not all of $H^\infty(D)$.
It is easy to verify that $\C{L}_I$ and $\C{L}_\B{T}$
are related by
\[ \C{L}_IQ_p = Q_p\C{L}_{\B{T}}\,,\]
and that $\sigma(\C{L}_{\B{T}})\subseteq \sigma(\C{L}_{I})$, which 
follows from the injectivity of $Q_p$.
On the other hand, an eigenvalue of
$\C{L}_{I}$ with an eigenfunction $f$ is also an eigenvalue of $\C{L}_{\B{T}}$
if $f\in \image{Q_p}$.
\end{rem}
The following lemma connects the spectrum of $\mathcal{L}_I$ 
with the spectrum of $\mathcal{L}_{\mathbb{T}}$. 
This result is mentioned in the introduction of \cite{Keller2004}
together with a proof based on the theory of Fredholm determinants. 
Here we shall give a short alternative proof. 
\begin{lem}\label{lem:interval}
Suppose that $\tau$ is 
an analytic expanding circle map and 
$T:I\to I$ the corresponding interval map fixing 
the interval endpoint $x_0$. Let 
$\mathcal{L}_{\mathbb{T}}$ and $\mathcal{L}_{I}$ be the corresponding
transfer operators as defined above.
Then the spectrum of $\mathcal{L}_I$ is given by 
\[ 
\sigma(\mathcal{L}_I) = 
\sigma(\mathcal{L_\mathbb{T}})\cup 
  \all{(T'(x_0))^{-n}}{n\in\mathbb{N}}\,.
\]
\end{lem}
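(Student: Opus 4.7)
The plan is to exploit the intertwining from Remark \ref{rem:Fp} together with a careful analysis of how $\mathcal{L}_I$ acts modulo the image of $Q_p$. Set $V:=\image{Q_p}$; since $p$ is periodic (all derivatives of $p$ match at $x_0$ and $x_1$), $V$ is precisely the subspace
\begin{equation*}
V=\{f\in H^\infty(D): f^{(n)}(x_0)=f^{(n)}(x_1)\text{ for all } n\geq 0\}\,,
\end{equation*}
which is the intersection of a countable family of closed hyperplanes, hence closed. The identity $\mathcal{L}_IQ_p=Q_p\mathcal{L}_\mathbb{T}$ makes $V$ invariant under $\mathcal{L}_I$ and renders $\mathcal{L}_I|_V$ similar to $\mathcal{L}_\mathbb{T}$, so the easy inclusion $\sigma(\mathcal{L}_\mathbb{T})\subseteq\sigma(\mathcal{L}_I)$ (already mentioned in Remark \ref{rem:Fp}) holds.

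The central computation is the action of $\mathcal{L}_I$ on the \emph{defect functionals} $\pi_n(f):=f^{(n)}(x_1)-f^{(n)}(x_0)$, whose common kernel is $V$. Differentiating $\mathcal{L}_I f=\sum_k \Phi_k'\cdot (f\circ\Phi_k)$ $n$ times via Leibniz and Fa\`a di Bruno and evaluating at $x_0$ and $x_1$, one sees that the interior matching conditions $\Phi_{k+1}^{(j)}(x_0)=\Phi_k^{(j)}(x_1)$ from \eqref{eq:fixpt}, holding for all orders $j\leq n$ and all $k=1,\ldots,K-1$, cause a clean telescoping cancellation of all contributions except the boundary ones from $\Phi_1$ at $x_0$ and $\Phi_K$ at $x_1$. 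The remaining matching $\Phi_1^{(j)}(x_0)=\Phi_K^{(j)}(x_1)$ then forces these survivors to depend on $f$ only through the values $\pi_j(f)$, and yields
\begin{equation*}
\pi_n(\mathcal{L}_If)=\sum_{j=0}^{n}c_{n,j}\pi_j(f),\qquad c_{n,n}=\Phi_1'(x_0)^{n+1}=(T'(x_0))^{-(n+1)}.
\end{equation*}
Thus on the quotient $H^\infty(D)/V$, the induced (compact) operator is upper triangular with diagonal entries $(T'(x_0))^{-(n+1)}$.

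The inclusion $\subseteq$ in the lemma is now a one-line argument. Compactness of $\mathcal{L}_I$ makes every nonzero $\mu\in\sigma(\mathcal{L}_I)$ an eigenvalue with eigenfunction $f$; if $f\in V$ then $\mu\in\sigma(\mathcal{L}_\mathbb{T})$ by Step~1, otherwise setting $m:=\min\{n:\pi_n(f)\neq 0\}$ and applying the displayed identity at $n=m$ forces $\mu=c_{m,m}=(T'(x_0))^{-(m+1)}$. For the reverse inclusion one must exhibit eigenfunctions at each $(T'(x_0))^{-n}$, $n\geq 1$. The cleanest route is to note that the induced operator on the quotient is compact, upper triangular in the graded basis given by the $\pi_n$'s, with the required diagonal, and appeal to the standard fact that for such operators the diagonal entries exhaust the non-zero spectrum; these spectral points then lift to $\sigma(\mathcal{L}_I)$ via the short exact sequence $0\to V\to H^\infty(D)\to H^\infty(D)/V\to 0$.

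The principal technical hurdle is making the cancellation in the key computation rigorous: every one of the $K-1$ interior matchings must be used at every order $j\leq n$ before the boundary matching at the fixed point can be invoked, which requires disciplined bookkeeping of the Fa\`a di Bruno terms but no genuinely new idea. A secondary, more delicate issue is realising the quotient $H^\infty(D)/V$ as a Banach space on which the formal upper triangular matrix $(c_{n,j})$ genuinely represents the induced operator, so that one can extract its full spectrum rather than merely its eigenvalues; compactness inherited from $\mathcal{L}_I$ and the exponential decay of Taylor coefficients of $H^\infty$ functions should suffice.
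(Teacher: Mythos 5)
Your core computation—defining the defect functionals $\pi_n(f)=f^{(n)}(x_1)-f^{(n)}(x_0)$, exploiting the matching conditions \eqref{eq:fixpt} to produce the telescoping cancellation, and arriving at $\pi_n(\mathcal{L}_If)=\sum_{j\le n}c_{n,j}\pi_j(f)$ with $c_{n,n}=\Phi_1'(x_0)^{n+1}$—is exactly the paper's computation (the paper calls your $\pi_n$ by the name $l_n$). Your argument for the inclusion $\sigma(\mathcal{L}_I)\subseteq\sigma(\mathcal{L}_{\mathbb{T}})\cup\{(T'(x_0))^{-n}\}$ (compactness gives eigenfunctions; split on whether the eigenfunction lies in $V=\image{Q_p}$; in the second case take the minimal index with $\pi_m(f)\neq 0$) is also the same as the paper's.

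Where you diverge is in proving $\{(T'(x_0))^{-n}\colon n\ge 1\}\subseteq\sigma(\mathcal{L}_I)$. You route through the quotient $H^\infty(D)/V$, assert that the induced operator is ``upper triangular in the graded basis given by the $\pi_n$'s,'' and invoke a ``standard fact'' that the diagonal then exhausts the nonzero spectrum. This is the weak point, and you flag it yourself: the $\pi_n$ are \emph{functionals} on the quotient, not a basis of it, and converting them into a Schauder basis with respect to which $\mathcal{L}_I/V$ is triangular is not immediate; nor is the ``standard fact'' available off the shelf in a Banach-space quotient without that structure. The paper sidesteps all of this by working in the \emph{dual}: the functionals $l_0,\dots,l_n$ span a genuine finite-dimensional $\mathcal{L}_I^*$-invariant subspace $V_n\subset H^\infty(D)^*$, the matrix of $\mathcal{L}_I^*|_{V_n}$ is triangular with diagonal $\Phi_1'(x_0)^{m+1}$, $m=0,\dots,n$, and finite-dimensional linear algebra plus $\sigma(\mathcal{L}_I^*)=\sigma(\mathcal{L}_I)$ finish the job. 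Your own identity $\pi_n\circ\mathcal{L}_I=\sum_{j\le n}c_{n,j}\pi_j$ is literally the statement that $\operatorname{span}\{\pi_0,\dots,\pi_n\}$ is $\mathcal{L}_I^*$-invariant, so the cleanest repair of your argument is to drop the quotient entirely and read your key formula as a statement about $\mathcal{L}_I^*$. As written, the proposal has a genuine gap at that one step, but it is a presentational one: the needed fact is already contained in what you computed, you just appealed to the wrong machinery to extract it.
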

\begin{proof}
Let $H^\infty(D)^*$ denote the strong dual of $H^\infty(D)$ and let  
$\mathcal{L}^*_{I}:{H}^\infty(D)^*\to {H}^\infty(D)^*$ denote the dual
operator of $\mathcal{L}_I$, that is, 
\[ (\mathcal{L}^*_{I} l )(f) = l (\mathcal{L}_{I}f) 
\quad (\forall l\in {H}^\infty(D)^*,\forall f\in {H}^\infty(D))\,.\]
For $n\in\mathbb{N}_0$, let $l_n \in {H}^\infty(D)^*$  be defined by 
\[ l_n(f)=f^{(n)}(x_1)-f^{(n)}(x_0) \quad \text{for } f\in H^\infty(D)\,.\] 
It is not difficult to see that 
$l_0$ is an eigenvector of $\mathcal{L}^*$ with
eigenvalue $\Phi^{'}_1(x_0)$ since
\begin{align*}
(\mathcal{L}^*_{I} l_0 )(f) = & (\mathcal{L}_{I}f) (x_1)
- (\mathcal{L}_{I}f)(x_0)\\
 = &  \Phi'_K(x_1) (f\circ \Phi_K)(x_1) -\Phi'_1(x_0) (f\circ \Phi_1)(x_0) \\
  & +  \sum_{k=1}^{K-1} \Phi'_k(x_1) (f\circ \Phi_k)(x_1) - \Phi'_{k+1}(x_0) (f\circ \Phi_{k+1})(x_0) \\
  = &  \Phi'_1(x_0) ( f(x_1) - f(x_0) ) \\
  = &  \Phi'_1(x_0) l_0(f)\,,
\end{align*}
where the penultimate equality follows from \eqref{eq:fixpt}.
We can proceed similarly for an arbitrary $n\in \B{N}$. Observe that 
the $n$-th derivative of $\C{L}_{I}f$ is given by 
$$(\C{L}_{I}f)^{(n)} = \sum_{k=1}^{K} \sum_{m=0}^{n-1} w_{k,m} \cdot (f^{(m)}\circ \Phi_k) +
\sum_{k=1}^{K} (\Phi'_k)^{n+1} \cdot (f^{(n)} \circ \Phi_k),$$
where each $w_{k,m}$ is
a weight function composed of derivatives of $\Phi_k$ of order up to $m+1$, and in analogy
with \eqref{eq:fixpt} satisfying $w_{k,m}(x_1) = w_{k+1,m}(x_0)$ for $k=1,\ldots, K-1$. 
A calculation similar to the above yields
\begin{align*}
(\mathcal{L}^*_{I} l_n )(f) =& (\mathcal{L}_{I}f)^{(n)} (x_1)
- (\mathcal{L}_{I}f)^{(n)}(x_0)\\
 =&  \sum_{m=0}^{n-1} w_{1,m}(x_0) l_m(f) + (\Phi'_{1}(x_0))^{n+1} l_n(f).
\end{align*}
It follows that $\mathcal{L}^*_{I} V_n \subseteq V_n$, where 
$V_n = \text{span}\{l_0,\ldots,l_n\}$ for each $n$. Thus
$(\Phi'_1(x_0))^{n}$ is an
eigenvalue of $\mathcal{L}^*_{I},$ and hence of 
$\mathcal{L}_{I}$.
As $T'(x_0) = 1/\Phi'_1(x_0)$ and every eigenvalue of $\mathcal{L}_{\mathbb{T}}$ is an eigenvalue of $\mathcal{L}_I$, we have shown 
$$\sigma(\mathcal{L_\mathbb{T}})\cup \{T'(x_0)^{-n}: n\in\mathbb{N}\}
\subseteq \sigma(\mathcal{L}_I).$$

For the converse inclusion recall Remark \ref{rem:Fp} and assume that 
$f\in {H}^\infty(D)$ is an eigenfunction of $\mathcal{L}_I$ with
eigenvalue $\mu$ and $f\notin \image{Q_p}$. 
It follows that there is $N\in\mathbb{N}_0$ such that
$f^{(N)}(x_0)\neq f^{(N)}(x_1)$ and $f^{(n)}(x_0)=f^{(n)}(x_1)$ for
$0\leq n<N$, from which 
$l_n(f) = 0$ for $0\leq n<N$.
Since $\mathcal{L}_{I}f = \mu f$, this implies
\[ 
l_N(\mu f) = l_N(\mathcal{L}_{I}f) = (\Phi'_1(x_0))^{N+1} l_N(f)\,.\]
As $l_N$ is linear and non-zero, it follows 
that $\mu = (\Phi'_1(x_0))^{N+1} = (T'(x_0))^{-N-1}$.  
\end{proof}

We can now apply this result to the interval maps arising from the 
family of circle maps defined in Section \ref{sec:circle}. 
Let $I=[-1,1]$ and $\lambda\in\B{R}$ with $|\lambda|<1$, 
then the interval map $T$
arising from $\tau$ in \eqref{eq:tau} fixes the interval endpoint
$x_0=-1$ with $1/T'(-1) = (\lambda+1)/{2}$.
By Theorem \ref{thm:main} and Lemma \ref{lem:interval}, 
the eigenvalues of $\tr_I$ can be divided into two classes, those
given by the eigenvalues of $\tr_\mathbb{T}$ (each of multiplicity two, except 
the eigenvalue $1$ of multiplicity one)   
and those by the powers of the inverse multiplier of the fixed point $x_0$, that is,
\begin{equation}\label{eq:specInt}
\sigma(\mathcal{L}_I)=\Big(\{ \lambda^n: n\in\mathbb{N}_0\}\cup
\{0\}\Big)
\cup 
\{\Big(\frac{\lambda+1}{2}\Big)^n: n\in\mathbb{N}\}\,,
\end{equation}
see also Figure \ref{fig:specInt}.

Considering $\lambda\in \B{C}$, say $\lambda=|\lambda| e^{i\alpha}$ with 
$|\lambda|<1$, we now
obtain counterexamples to Conjecture \ref{conj: LevinMayer}.
The fixed point of $\tau$ is $z_0 = (\lambda-1)/(1-\overline{\lambda})
\in \mathbb{T}$
with 
\[T'(-1) = \tau'(z_0) = \frac{\lambda + \overline{\lambda}-2}
{\lambda\overline{\lambda}-1}
= \frac{2(|\lambda|\cos(\alpha)-1)}{|\lambda|^2-1}.\]
As above, the spectrum of $\tr_I$ splits into two parts:
\begin{align*}\label{eq:specInt_complex}
\sigma(\mathcal{L}_I) =& \Big(\{\lambda^n : n\in \mathbb{N}_0\}
\cup \{\overline{\lambda}^n: n\in \mathbb{N}\} 
\cup \{0\} \Big) \cup \\
&\{ \Big(\frac{|\lambda|^2-1}{2(|\lambda|\cos(\alpha)-1)}\Big)^n:
n\in \mathbb{N}\}.
\end{align*}
Note that the transfer operator $\mathcal{L}_I$ associated to $T$
satisfies the conditions of Conjecture \ref{conj: LevinMayer}, 
but, for $\lambda\notin \mathbb{R}$, has countably infinitely many non-real eigenvalues of arbitrarily
small modulus.

\begin{figure}[!h]
\center{\includegraphics[width=.45\paperwidth]{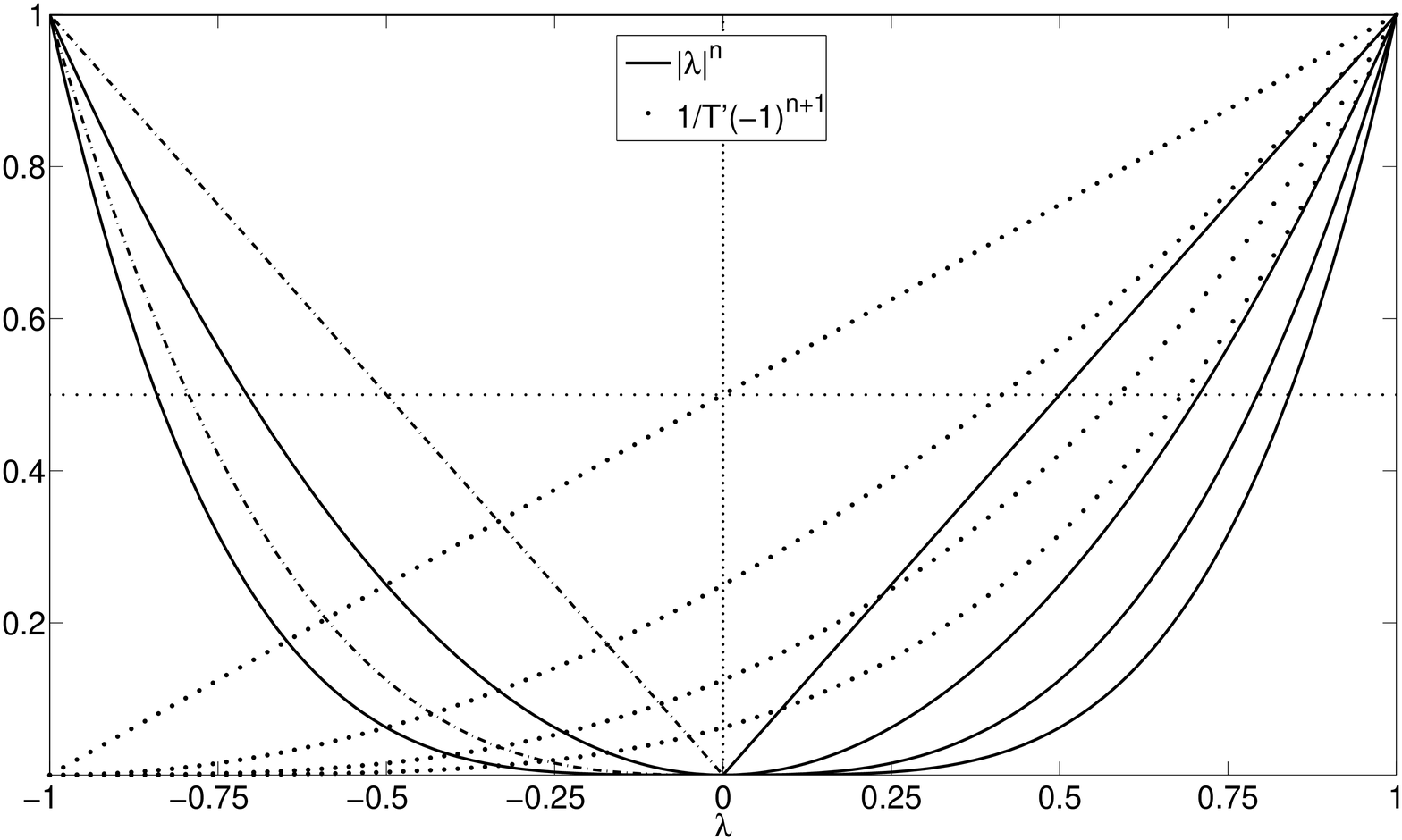}} 
 \caption{
For each $\lambda\in (-1,1)$ and for $n = 0,\ldots,4$ the eigenvalues in the spectrum \eqref{eq:specInt} of $\C{L}_I$ are plotted (in modulus). 
These are comprised of the eigenvalues $\lambda^n$ 
of $\C{L}_{\B{T}}$ (solid line for $\lambda^n > 0$, and dashed for $\lambda^n < 0$ ) and the eigenvalues
$1/(T'(-1))^{n+1}$ of $\C{L}_{I}$.
Note that the case $\lambda = 0$ corresponds to the 
doubling map. 
}
\label{fig:specInt}     
\end{figure} 

\bigskip

To the best of our knowledge these are the first examples of
nontrivial circle and interval maps for which the entire spectrum
of the associated Perron-Frobenius operators is known explicitly.
Certain conjectures were previously hard to test,  
without examples. These might now be more accessible.

\section*{Acknowledgements}
We would like to thank Hans-Henrik Rugh for pointing out to us that
transfer operators of circle maps require a holistic approach. 
W.J. gratefully acknowledges support by EPSRC (grant no.\ EP/H04812X/1).
%
%
%
%
%
%
%

\appendix
\setcounter{section}{1}
\renewcommand\thesection{\Alph{section}}

\end{document}